\documentclass{amsart}

\usepackage{amsmath,amsthm,amssymb,amscd,amstext,amsopn,amsxtra,amsfonts, eucal, paralist, enumitem, fancyhdr, mathtools, tikz-cd, rotating}

\usepackage[all]{xy}
\SelectTips{eu}{}
\entrymodifiers={+!!<0pt,\fontdimen22\textfont2>}

\usepackage{url}
\usepackage{cite}
\usepackage[hidelinks]{hyperref}

\newcounter{gentheorem} \numberwithin{gentheorem}{section}
\theoremstyle{plain}
\newtheorem{defn}[gentheorem]{Definition}
\newtheorem{thm}[gentheorem]{Theorem}
\newtheorem{lemma}[gentheorem]{Lemma}
\newtheorem{prop}[gentheorem]{Proposition}

\theoremstyle{remark}

\newtheorem{remark}[gentheorem]{Remark}

\theoremstyle{definition}

\numberwithin{equation}{section}

\def\cC{\mathcal{C}}

\def\cO{\mathcal{O}}

\def\11{\mathbf{1}}

\def\CC{\mathbb{C}}

\def\FF{\mathbb{F}}

\def\QQ{\mathbb{Q}}

\def\ZZ{\mathbb{Z}} 

\def\fa{\mathfrak{a}}

\def\fP{\mathfrak{P}}

\def\End{\mathrm{End}}

\def\Gal{\mathrm{Gal}}

\def\Hom{\mathrm{Hom}}

\def\Jac{\mathrm{Jac}}

\newcommand{\altdef}[2]{\Isom{#1}{#2}^{\text{Fr}}}
\newcommand{\Pic}[1]{\mathcal{C}^{#1\Phi}}
\newcommand{\Picr}[1]{\widetilde{\Pic{#1}}}
\newcommand{\Isom}[2]{\text{CM}_{#1,#2}}
\newcommand{\Isomr}[2]{\widetilde{\text{CM}}_{#1,#2}}
\newcommand{\Cl}{\operatorname{Cl}}
\usepackage{url}
	
\title[Constructing Picard Curves]{Constructing Picard Curves with Complex Multiplication using the Chinese Remainder Theorem}	  	  
\author{Sonny Arora}
\address{Department of Mathematics \\ The Pennsylvania State
	University \\ University, Park, PA 16802, USA}
\email{sza149@psu.edu}

\author{Kirsten Eisentr\"ager} 
\thanks{The first author was partially supported by National
	Science Foundation grants DMS-1056703 and CNS-1617802. The second author was partially
	supported by National Science Foundation awards DMS-1056703 and
	CNS-1617802, and by the National Security Agency (NSA) under Army
	Research Office (ARO) contract number W911NF-12-1-0541.}
\address{Department of Mathematics \\ The Pennsylvania State
	University \\ University, Park, PA 16802, USA}
\email{eisentra@math.psu.edu}

\begin{document}

\begin{abstract}
  We give a new algorithm for constructing Picard curves over a finite
  field with a given endomorphism ring. This has important
  applications in cryptography since curves of genus 3 allow one to
  work over smaller fields than the elliptic curve case. For a sextic
  CM-field $K$ containing the cube roots of unity, we define and
  compute certain class polynomials modulo small primes and then use
  the Chinese Remainder Theorem to construct the class polynomials
  over the rationals. We also give some examples.
\end{abstract}

\maketitle


\section{Introduction}
For cryptographic protocols whose security relies on the difficulty of
the discrete log problem, one often wants to find a group whose order
is divisible by a large prime. One option for the group is the
group of points of an elliptic curve over a finite field, or more
generally, the group of points on the Jacobian of a curve over a
finite field. Thus, we are interested in the problem of finding curves
over finite fields whose Jacobian has a given number of points.

For elliptic curves, Atkin and Morain showed in \cite{AM93} that one
can use the theory of complex multiplication to solve this
problem. The approach taken in \cite{AM93} involves computing the
Hilbert class polynomial with respect to an imaginary quadratic field
by evaluating modular $j$-invariants at certain values. An alternate
method to construct the Hilbert class polynomial, taken in
\cite{CNST98} and \cite{ALV04}, is to compute the polynomial modulo
several small primes and then reconstruct the polynomial using the
Chinese Remainder Theorem. In the genus 2 case, analogous to the
construction of the Hilbert class polynomial, one wishes to construct
the so-called Igusa class polynomials. In this case, one can again use
a Chinese Remainder Theorem approach to construct the Igusa class
polynomials as shown in \cite{EL09}, \cite{FL08}.
	
If one wishes to construct genus 3 curves with a given number of
points, less is known. Genus 3 curves fall into two classes:
hyperelliptic curves and non-hyperelliptic plane quartics. One
difficulty in the case of genus 3 curves is that there is no theory of
invariants which works for all genus 3 curves. However, invariants do
exist for the classes of hyperelliptic curves and non-hyperelliptic
plane quartics separately. By making restrictions on the type of genus
3 curves considered, algorithms for constructing genus 3 curves with
complex multiplication have been presented in \cite{Wen01},
\cite{KW05}, \cite{LS16}, \cite{BILV16}, and \cite{KLLRSS17}. All
these papers take a complex analytic approach to constructing genus 3
curves similar to the method in \cite{AM93}. The papers \cite{Wen01},
\cite{BILV16} deal with constructing hyperelliptic genus 3 curves with
complex multiplication. The paper \cite{KW05} and its improvement
\cite{LS16} deal with constructing Picard curves with complex
multiplication, while \cite{KLLRSS17} deals with
constructing plane quartics defined over $\QQ$ with complex
multiplication.  Due to the numerous improvements to the Chinese
Remainder theorem approach in the elliptic curve case \cite{BBEL08},
\cite{Sut12}, it is of interest to try to implement a Chinese
Remainder Theorem approach for the construction of genus 3
curves. This is the aim of this paper.
	
As in \cite{KW05}, we will restrict our attention to Picard
curves. These are genus 3 curves of the form $y^3 = f(x)$ where
$\deg(f) = 4$ and $f$ has no repeated roots over the algebraic
closure. One advantage to using these curves is that it is very simple
to generate representatives for all isomorphism classes of Picard curves over a finite
field. Also, if $K$ is a sextic CM-field that contains the cube
roots of unity, then, by \cite [Lemma 1]{KW05}, all simple, principally
polarized abelian varieties of dimension 3 with complex multiplication
by $\cO_K$ arise as the Jacobians of Picard curves, so we can use
Picard curves in a CRT approach. 

\subsection{Statement of theorem} Let $K$ be a sextic CM-field
containing the cube roots of unity. Fix a primitive CM-type $\Phi$ on
the field $K$. Our first step
will be to define suitable class polynomials for $(K, \Phi)$. For
this we will require invariants for Picard curves.

We work with the set of invariants for Picard curves $j_1, j_2, j_3$
defined in \cite{KLS18}. They are discussed in more detail in
Section~\ref{sec:invariants}.

We now wish to introduce class polynomials for Picard curves. Recall,
the Hilbert class polynomial for an imaginary quadratic field $K$ has
as roots the $j$-invariants of elliptic curves with complex
multiplication by the full ring of integers $\cO_K$ of $K$. Analogous
to this situation, we would like the class polynomials we define, for
a sextic CM-field $K$ containing the cube roots of unity, to have as
roots the invariants of Picard curves with complex multiplication by
$\cO_K$. A complication that does not arise in the genus 1 case is
that we will need to restrict to Picard curves whose Jacobian has a
given primitive CM-type on $K$. In genus 2, a restriction on the
CM-type for class polynomials was discussed in \cite{LR13}.

We would like our class polynomials to be defined over $\QQ$. This
will allow us to multiply by a large enough integer to clear
denominators and hence use the Chinese remainder theorem on the
resulting polynomials modulo various primes. For an abelian variety
$A$ of CM-type $(K, \Phi)$ and for $\sigma \in \Gal(\overline{\QQ}/\QQ)$, $A^\sigma$ is of type $(K,
\sigma\Phi)$. Thus, we define class polynomials for $i=1,...,3$ as
follows:
\[H_i^\Phi := \prod (X - j_i(C)),\] where the product runs over all
isomorphism classes of Picard curves $C/\CC$ whose Jacobian has complex
multiplication by $\cO_K$ of type $\sigma \Phi$ for some
$\sigma \in \Gal(\overline{\QQ}/\QQ)$. These polynomials will be
defined over $\QQ$. Should one want to re-construct a Picard curve
$C/\CC$ such that $\End(\Jac(C)) \cong \cO_K$ from the roots of the
class polynomials, it is more convenient to work with a different set
of class polynomials, introduced in \cite{GHKRW06} in the genus 2
setting. This is discussed more in Section~\ref{sec:red-of-class}.

We have the following theorem:

\begin{thm} The following algorithm takes as input a sextic CM-field
	$K$ containing the cube roots of unity and a primitive CM-type
	$\Phi$ on $K$. Assuming the bound $B$ in Theorem~\ref{thm:CRT} is known, the algorithm outputs the class polynomials
	$H_i^\Phi$, where $i=1,...,3$, corresponding to the type $(K,\Phi)$
	\begin{enumerate}
		\item Construct a set of rational primes $S$ which satisfy
		\begin{enumerate}
			\item $2 \not \in S$
			\item Each $p \in S$ splits completely in $K$
			\item Each $p \in S$ splits completely into principal ideals in $K^*$, the reflex field for the type $(K, \Phi)$.
			\item $\prod_{p \in S}p > B$ where $B$ is the bound in
			Theorem~\ref{thm:CRT}. 
		\end{enumerate}
		\item Form the class polynomials $H_i^\Phi$ modulo $p$ for every $p \in S$. Let $H_{i,p} := H_i^\Phi \mod p$. Then
		\[H_{i,p} =  \prod(X - j_i(C)),\]
		where the product is over all $\overline{\FF}_p$-isomorphism classes
		of Picard curves that arise as the reduction of a Picard curve over
		$\CC$ whose Jacobian has complex multiplication by $\cO_K$ of type
		$\sigma\Phi$ for some $\sigma \in \Gal(\overline{\QQ}/\QQ)$.
		\item Form the polynomials $H_i^\Phi$ from the $H_{i,p}$, $p \in S$, using the Chinese Remainder Theorem. 
	\end{enumerate}
	\label{thm:main-algorithm}
\end{thm}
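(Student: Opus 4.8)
The plan is to establish correctness of the algorithm in three stages mirroring its three steps, with the bulk of the work residing in Step~(2): proving that the mod-$p$ reduction $H_{i,p}$ of the rational class polynomial $H_i^\Phi$ really is the explicit product over $\overline{\FF}_p$-isomorphism classes of Picard curves described there. First I would verify that $H_i^\Phi \in \QQ[X]$: this follows because the set of isomorphism classes of Picard curves $C/\CC$ whose Jacobian has CM by $\cO_K$ of type $\sigma\Phi$ for \emph{some} $\sigma \in \Gal(\overline\QQ/\QQ)$ is stable under the natural $\Gal(\overline\QQ/\QQ)$-action (since $A^\tau$ has type $(K,\tau\sigma\Phi)$ when $A$ has type $(K,\sigma\Phi)$, as noted in the text), and the invariants $j_i$ are Galois-equivariant; hence the coefficients of $H_i^\Phi$ are fixed by $\Gal(\overline\QQ/\QQ)$, and they are algebraic integers because the $j_i$ of CM curves are (the relevant abelian varieties have potentially good reduction everywhere), so in fact $H_i^\Phi \in \ZZ[X]$.

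Next, for Step~(1), I would show that a set $S$ of primes with properties (a)--(d) exists and can be constructed effectively. Properties (b) and (c) are Chebotarev conditions: the primes splitting completely in $K$ and in the Hilbert class field of the reflex field $K^*$ have positive density, so infinitely many such primes exist and one enumerates them by testing splitting behavior; excluding $2$ and accumulating until the product exceeds $B$ terminates. This is where the hypothesis that $B$ is known is used — without it one cannot certify that enough primes have been collected — and I would flag that $B$ comes from Theorem~\ref{thm:CRT}, assumed available.

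The heart of the argument is Step~(2). Here the plan is: for $p$ split completely in $K$ and splitting completely into principal ideals in $K^*$, a theorem of complex multiplication (Shimura--Taniyama, in the form used for CRT algorithms) guarantees that every principally polarized abelian threefold over $\CC$ with CM by $\cO_K$ of type $\sigma\Phi$ has good reduction mod a prime of $\overline\QQ$ above $p$, that the reduction is an ordinary abelian variety over $\overline\FF_p$ whose endomorphism ring still contains $\cO_K$, and conversely that every such ordinary abelian variety over $\overline\FF_p$ with CM by $\cO_K$ of the right type lifts to characteristic zero. Combined with Kılıçer--Lercier--Ritzenthaler-type results that such abelian varieties are Jacobians of Picard curves (the genus-3 analogue of \cite[Lemma 1]{KW05}) and that the invariants $j_i$ commute with reduction at primes of good reduction where $p \nmid$ the relevant denominators (here one uses $2 \notin S$ and that the $j_i$ from \cite{KLS18} are $p$-integral for good ordinary reduction), one gets a bijection between the geometric points counted by $H_i^\Phi$ and those counted by the displayed product for $H_{i,p}$, compatible with the $j_i$. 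Therefore $H_i^\Phi \bmod p$ equals that product. The main obstacle — and the step deserving the most care — is controlling reduction of the \emph{invariants}: one must ensure the chosen $j_i$ have no $p$ in their denominators for the primes in $S$ and that distinct geometric isomorphism classes over $\CC$ do not collapse mod $p$ in a way that changes multiplicities, which is exactly why the CM-type and ordinariness conditions are imposed and why one restricts to $p \neq 2$.

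Finally, Step~(3): since $H_i^\Phi \in \ZZ[X]$ is monic of known degree with $|{\rm coefficients}| \le B$, and we know $H_i^\Phi \bmod p$ for all $p \in S$ with $\prod_{p\in S} p > 2B$ (absorbing a factor of $2$ to handle signs, or working with $B$ already a bound on absolute values and $\prod p > B$ as stated), the Chinese Remainder Theorem recovers each coefficient uniquely in $\ZZ$, hence recovers $H_i^\Phi$ exactly. This completes the proof of correctness.
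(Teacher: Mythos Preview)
Your overall strategy matches the paper's: reduce to showing $H_i^\Phi \bmod p$ equals the displayed product (the paper's Proposition~\ref{thm:class-polys-reduce}), then invoke CRT (Theorem~\ref{thm:CRT}). But there is a genuine error. You assert that the $j_i(C)$ of CM Picard curves are algebraic integers, hence $H_i^\Phi \in \ZZ[X]$, and you justify this by potential good reduction of the Jacobian. That inference fails in genus~$3$: the invariants $j_1 = g_2^3/g_3^2$, $j_2 = g_2 g_4/g_3^2$, $j_3 = g_4^3/g_3^4$ are ratios, and good reduction of $\Jac(C)$ does not prevent $g_3$ from vanishing modulo a prime. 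The paper's Proposition~\ref{thm:bad-red-cond} shows $v_\fP(j_i(C)) \ge 0$ \emph{only} for primes $p$ splitting completely in $K$; at other primes the invariants can and do acquire denominators, and indeed bounding those denominators is the subject of \cite{KLS18,KLLNOS16}. So $H_i^\Phi \in \QQ[X]$ but not $\ZZ[X]$ in general.

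This breaks your Step~(3) as written. You cannot run plain integer CRT on the coefficients: one must first know (or bound) the common denominator $M$, clear it, and then CRT on integers of size at most $NM$. This is exactly why the paper's bound is $B = 2NM$ in Theorem~\ref{thm:CRT}, not just $2N$. Your argument would be repaired by dropping the integrality claim, observing that the chosen primes $p \in S$ are coprime to the denominators (this is what Proposition~\ref{thm:bad-red-cond} actually gives you, via condition~(b)), and invoking CRT on $M \cdot H_i^\Phi \in \ZZ[X]$. A secondary omission: you do not say how Step~(2) is \emph{computed}. The paper's proof explains that one enumerates Picard curves over $\FF_p$ and tests membership in $\Picr{\sigma}$ via the Frobenius characterization $\altdef{K}{\Phi}$ (Theorem~\ref{thm:bijection-of-isom-classes}) together with the endomorphism-ring test of Section~\ref{sec:endo-comp}; without this, the statement that the algorithm ``forms $H_{i,p}$'' is not substantiated.
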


We review background from the theory of complex
multiplication in Section~\ref{sec:results-cm} and prove some results
we will need. In Section~\ref{sec:invariants} we review
invariants of Picard curves. In Section~\ref{sec:red-of-class}, we
discuss reducing class polynomials modulo primes. In
Section~\ref{sec:isomorphism-classes} we show how to compute
$H_i^\Phi$ modulo a prime $p$ and we prove
Theorem~\ref{thm:main-algorithm}. Section~\ref{sec:endo-comp}
discusses the endomorphism ring computation, and in
Section~\ref{sec:examples} we give some examples.

\section{Results from complex multiplication}
\label{sec:results-cm}
\begin{defn}[CM-type] Let $K$ be a CM-field of degree $2g$ and let $\Omega$ be an
  algebraically closed field of characteristic 0. Denote by
  $\Hom(K, \Omega) = \{\phi_1, \phi_2, ..., \phi_{2g}\}$ the set of
  embeddings of $K$ into $\Omega$. Furthermore, let $\rho$ denote the
  automorphism inducing complex conjugation on $K$. Then any subset
  of these embeddings $\Phi$ satisfying the disjoint union
  $\Phi \sqcup \rho \circ \Phi = \Hom(K, \Omega)$ is called a CM-type
  on $K$.
\end{defn}

\subsection{Injectivity of the reduction map}
\begin{defn} Let $A$ be an abelian variety over a field $k$ with
  complex multiplication by the maximal order $\cO_K$ in a CM-field
  $K$, and let $\fa$ be an ideal in $\cO_K$. A surjective homomorphism
  $\lambda_{\fa}: A \rightarrow A^{\fa}$, to an abelian variety
  $A^{\fa}$, is an $\fa$-multiplication if every homomorphism
  $a: A \rightarrow A$ with $a \in \fa$ factors through
  $\lambda_{\fa}$, and $\lambda_{\fa}$ is universal for this property,
  in the sense that, for every surjective homomorphism
  $\lambda':A \rightarrow A'$ with the same property; there is a
  homomorphism $\alpha: A' \rightarrow A^{\fa}$, necessarily unique,
  such that $\alpha \circ \lambda' = \lambda_{\fa}$.

\end{defn}


For abelian varieties $A$ and $B$ defined over a number field and with good
reduction modulo a prime $\fP$, the next proposition gives a condition
under which $A$ and $B$ will be isomorphic provided that their
reductions modulo $\fP$ are isomorphic.  The fact that the conditions
below are sufficient for an isomorphism to lift was given for
dimension 2 in \cite[Theorem 2]{EL09}. Here we give a general proof of
this fact.

\begin{prop} Let $(A, \iota)$, $(B, \iota')$ be simple, abelian
  varieties of type $(K, \Phi)$ defined over a number field
  $k$. Furthermore, assume that $\fP$ is a prime of $k$ such that $A$
  and $B$ have good reduction modulo $\fP$ and denote by $\tilde{A}$
  and $\tilde{B}$ their reductions modulo $\fP$ respectively. If
  $\tilde{A}$ and $\tilde{B}$ are simple with endomorphism ring
  isomorphic to $\cO_K$ and $\gamma: \tilde{A} \rightarrow \tilde{B}$
  is an isomorphism over $\overline{\FF}_p$, then $A$ and $B$ are
  isomorphic over $\overline{k}$.
\label{thm:isomorphism-lifts}
\end{prop}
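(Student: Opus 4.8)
The plan is to lift the isomorphism $\gamma$ by an ideal-theoretic argument, using the theory of complex multiplication to control how the CM structure descends and reduces. First I would fix CM structures: since $A$ and $B$ are of type $(K,\Phi)$ and simple over the number field $k$, after possibly enlarging $k$ I may assume all endomorphisms of $A$, $B$, $\tilde A$, $\tilde B$ are defined over $k$ and over the residue field, respectively. The main structural input is Shimura--Taniyama theory: any abelian variety of type $(K,\Phi)$ over $\CC$ is isogenous to $\CC^g/\Phi(\fa)$ for a fractional $\cO_K$-ideal $\fa$, and the polarized isomorphism class is determined by the ideal class of $\fa$ together with a finite amount of polarization data. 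So I would attach to $A$ and $B$ fractional ideals $\fa$, $\fb$; the goal becomes showing $\fa$ and $\fb$ lie in the same class (equivalently, that the $\cO_K$-linear map $\CC^g/\Phi(\fa)\to\CC^g/\Phi(\fb)$ induced by multiplication by some $\mu\in K^\times$ exists), which is exactly what an isomorphism over $\overline k$ encodes.

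Next I would translate the hypothesis ``$\tilde A\cong\tilde B$ over $\overline{\FF}_p$'' into an ideal statement. Because $\tilde A$ and $\tilde B$ have endomorphism ring exactly $\cO_K$ and are simple, $\gamma$ is an $\cO_K$-linear isomorphism; composing with the reduction of a chosen isogeny $A\to B$ over $\overline k$ (which exists since both are $K$-isogenous abelian varieties of the same type), we get an $\cO_K$-linear endomorphism of $\tilde B$, i.e. an element of $\cO_K$, that measures the ``defect.'' The key point is that the kernel of an $\fa$-multiplication reduces well: by Serre--Tate / the theory in Milne's CM notes (and the $\fa$-multiplication formalism recalled just above in the excerpt), an $\fa$-multiplication $\lambda_\fa:A\to A^\fa$ over $k$ reduces modulo $\fP$ to an $\fa$-multiplication $\tilde A\to\widetilde{A^\fa}$, because good reduction is compatible with the universal property and $\fP\nmid$ (relevant conductor/different), and $p$ is split in $K$ so $\fa$ is prime to the residue characteristic. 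Hence the ideal class separating $\fa$ from $\fb$ maps, under reduction, to the corresponding class for $\tilde A,\tilde B$; since the latter are isomorphic, that reduced class is trivial.

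The heart of the argument — and the step I expect to be the main obstacle — is injectivity of the resulting reduction-of-ideal-classes map, i.e. showing that the class of $\fb\fa^{-1}$ being trivial ``mod $\fP$'' forces it to be trivial over $\overline k$. Concretely: an isomorphism $\tilde A\xrightarrow{\sim}\tilde B$ of $\cO_K$-linearly structured abelian varieties gives an element $\mu\in K^\times$ with $\mu\fa=\fb$ and with $\mu$ compatible with the polarizations up to the action of units; I must promote this same $\mu$ to an honest isomorphism $A\to B$ over $\overline k$. This is where I would invoke that the relevant isogeny between $A$ and $B$ over $\overline k$ has a kernel which is an $\cO_K/\fc$-module for an ideal $\fc$ prime to $\fP$, and that reduction mod $\fP$ is injective on such finite group schemes (étale in char $p$, with $p\nmid\#$); therefore the kernel, being killed after reduction, was already trivial, so the isogeny was an isomorphism. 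The delicate points to get right are: (a) handling the polarizations — one needs the isomorphism to respect principal polarizations, which requires tracking a unit in $\cO_K^\times$ and using that $K$ contains no nontrivial CM-compatible obstruction (as in the genus-2 case of \cite{EL09}); (b) making sure all these morphisms are defined over a common field where reduction makes sense, which is the reason for the initial base-extension; and (c) checking the prime-to-$p$ condition on the connecting ideal $\fc$, which follows from $p$ splitting completely in $K$ together with good reduction. Assembling these, $\gamma$ lifts to an isomorphism $A\cong B$ over $\overline k$, proving the proposition.
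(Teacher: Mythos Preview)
Your outline starts in the right place---linking $A$ and $B$ by an $\fa$-multiplication $\lambda_\fa$ and composing its reduction with $\gamma^{-1}$ to land in $\End(\tilde A)\cong\cO_K$---but the step you flag as the ``heart of the argument'' is where the proposal breaks down. There is no meaningful notion of an $\cO_K$-ideal class being ``trivial mod $\fP$'': the ideal $\fa$ lives in $\cO_K$ and does not reduce. More seriously, your fallback argument---that the kernel of the chosen isogeny $A\to B$ ``being killed after reduction, was already trivial''---is false. The reduction $\tilde\lambda_\fa$ is again an $\fa$-multiplication of the same degree, so its kernel is not trivial; the isomorphism $\gamma$ is simply a \emph{different} map from $\tilde\lambda_\fa$, and nothing forces $\ker\tilde\lambda_\fa=0$. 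So you cannot conclude that $\lambda_\fa$ itself is an isomorphism.

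What actually works (and what the paper does) is much more direct and uses none of the ideal-class or lattice bookkeeping. Set $a\in\cO_K$ to be the element with $\tilde\iota(a)=\gamma^{-1}\circ\tilde\lambda_\fa$; since this factors through the $\fa$-multiplication $\tilde\lambda_\fa$, one has $a\in\fa$, and hence $\iota(a):A\to A$ factors through $\lambda_\fa$ over $k$, say $\iota(a)=\gamma_1\circ\lambda_\fa$. Reducing and cancelling the epimorphism $\tilde\lambda_\fa$ gives $\tilde\gamma_1=\gamma^{-1}$. Do the same on the other side to get $\gamma_2$ with $\tilde\gamma_2=\gamma$. Now use injectivity of the reduction map on \emph{Hom sets} (not on kernels) to conclude $\gamma_1\gamma_2=\id$ and $\gamma_2\gamma_1=\id$. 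Two further points: the proposition says nothing about polarizations, so your entire item (a) is unnecessary; and the hypothesis that $p$ splits completely in $K$ is not part of the statement and is not needed---the argument above never uses it, so your item (c) is both an added assumption and irrelevant.
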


\begin{proof}
  As $(A, \iota), (B, \iota')$ have the same type then, by
  \cite[Chapter II, Proposition 16]{Shi98}, they are isogenous via an $\fa$-multiplication, which we denote by $\lambda_{\fa}$. After possibly
  taking a field extension and picking a prime above $\fP$, we can
  assume that $\lambda_{\fa}$ and all endomorphisms are defined over
  $k$. The reduction $\tilde{\lambda_{\fa}}$ is also an
  $\fa$-multiplication \cite[Proposition 7.30]{Mil06}. Define an
  embedding $\tilde{\iota} : \cO_K \rightarrow \End(\tilde{A})$ by
  $\tilde{\iota}(a) = \widetilde{\iota(a)}$. This map is an
  isomorphism. Let $a \in \cO_K$ be such that
  $\tilde{\iota}(a) = \gamma^{-1} \circ \tilde{\lambda_{\fa}}
  \in \End(\tilde{A})$. As $\tilde{\iota}(a)$ factors through
  $\tilde{\lambda_{\fa}}$, $a \in \fa$ by \cite[Corollary
  7.24]{Mil06}. Also, $\iota(a)$ must factor through the
  $\fa$-multiplication, $\lambda_\fa$, that is,
  $\iota(a) = \gamma_1 \circ \lambda_\fa$ for $\gamma_1$ some isogeny
  from $B$ to $A$.

  Reducing modulo $\fP$,
  $\tilde{\iota}(a) = \tilde{\gamma_1} \circ \tilde{\lambda_\fa}$. As
  $\lambda_\fa$ is surjective, this implies
  $\gamma^{-1} = \tilde{\gamma_1}$. Similarly, we can find a
  $\gamma_2$ such that $\tilde{\gamma_2} = \gamma$. Then
  $\tilde{\gamma_1} \circ \tilde{\gamma_2} = \gamma^{-1} \circ \gamma
  = id$. As the reduction map is injective,
  $\gamma_1 \circ \gamma_2 = id$ and $\gamma_2 \circ \gamma_1 = id$,
  thus $A$ and $B$ are isomorphic.
\end{proof}

\subsection{The congruence relation}
\label{sec:cong-rel}
Let $(A,\iota)/\CC$ be of type $(K, \Phi)$ with $\End(A) \cong
\cO_K$. Denote by $(K^*, \Phi^*)$ the reflex of $(K, \Phi)$. Let $k$
be a field of definition for $(A,\iota)$. As the Hilbert class field
$H$ of $K^*$ is a field of definition for $(A,\iota)$ (see
\cite[Proposition 2.1]{Gor97}), we may assume that $k \subseteq
H$. Take $L$ to be a Galois extension of $\QQ$ containing the field of
definition $k$ and the field $K$. Recall $k$ contains $K^*$ by
\cite[Chapter III, Theorem 1.1]{Lan83}. Let $\fP$ be a prime of $k$ at
which $A$ has good reduction. Let $\fP_{K^*}$ be the prime of $K^*$
below $\fP$. Pick a prime $\fP_L$ of $L$ above $\fP$ and write
$\Phi_L^{-1}$ for the set of elements $\psi$ of $\Gal(L/\QQ)$ such
that $(\psi^{-1})_{|K} \in \Phi$.

Let $\pi \in \cO_K$ be such that $\tilde{\iota}(\pi)$ is the
N$_{k/\QQ}(\fP)$-th power Frobenius on the reduction $\tilde{A}$. In
Section~\ref{sec:isomorphism-classes} we will use the following
proposition, which is an easy consequence of the Shimura-Taniyama
congruence relation, to obtain a bijection between abelian varieties
with CM by $\cO_K$ of type $\Phi$ and abelian varieties over a finite
field satisfying certain properties.

\begin{prop} Assume that $p$ splits completely in $K$ and splits completely into principal ideals
  in $K^*$. Also, let $M$ be the
  Galois closure of the compositum of $K$ and $K^*$ and let $\fP_M$ be
  a prime above $\fP_{K^*}$. Write $\Phi_M^{-1}$ for the set of
  elements $\gamma$ of $\Gal(M/\QQ)$ such that $(\gamma^{-1})_{|K} \in
  \Phi$. Then $\pi\cO_M = \prod_{\gamma \in \Phi_M^{-1}}(\fP_M)^{\gamma}$.
\label{thm:frob-cond}
\end{prop}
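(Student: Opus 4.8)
The plan is to deduce this from the Shimura--Taniyama congruence relation together with the hypotheses that $p$ splits completely in $K$ and splits completely into principal ideals in $K^*$. First I would recall the precise statement of the congruence relation in the form given in Shimura (or in \cite{Lan83}, Chapter III): for $(A,\iota)/k$ of type $(K,\Phi)$ with good reduction at $\fP$, if $\pi\in\cO_K$ induces the $\mathrm{N}_{k/\QQ}(\fP)$-th power Frobenius on $\tilde A$, then the ideal $\pi\cO_K$ is determined by a reflex-norm-type formula; passing to a large enough Galois field $M$ (the Galois closure of the compositum of $K$ and $K^*$) turns this into the clean product formula $\pi\cO_M = \prod_{\gamma\in\Phi_M^{-1}}(\fP_M)^\gamma$ that is being asserted. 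The role of $M$ is exactly so that all the Galois conjugates in $\Phi_M^{-1}$ make sense simultaneously and so that $\fP_M$ is unramified over everything in sight, which holds since $p$ is unramified in $K$ (it splits completely) and, after enlarging, in $M$.

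The key steps, in order, are: (i) reduce to working with ideals in $\cO_M$ rather than $\cO_K$, using that $\cO_M$ is faithfully flat over $\cO_K$ so that equality of ideals can be checked after extension; (ii) invoke the congruence relation to identify $\pi\cO_M$ with the image under the reflex norm map of $\fP_{K^*}$, lifted to $M$; (iii) unwind the definition of the reflex norm in terms of the CM-type: for the compositum inside $M$, the reflex norm of a prime $\fP_M$ over $\fP_{K^*}$ is precisely $\prod_{\gamma\in\Phi_M^{-1}}(\fP_M)^\gamma$, by the standard description of $\Phi^*$ and the transitivity of conjugation; (iv) check that the splitting hypotheses on $p$ guarantee the reduction $\tilde A$ is defined over a prime field context in which $\pi$ genuinely lies in $\cO_K$ (not merely in some order), so that the statement $\pi\cO_M=\dots$ is literally an equality of $\cO_M$-ideals. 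The principality hypothesis on the splitting in $K^*$ is what ensures $\fP_{K^*}$ is principal, so that $\pi$ can be taken in $\cO_K$ itself rather than only defining an ideal class.

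I expect the main obstacle to be bookkeeping: matching the various incarnations of the CM-type (on $K$, on the reflex $K^*$, lifted to $L$, lifted to $M$) and verifying that the set $\Phi_M^{-1}$ as defined here coincides with the set that appears when one expands the reflex norm $N_{\Phi^*}$ composed with the inclusion $K^*\hookrightarrow M$. Concretely, one must check that $\{\gamma\in\Gal(M/\QQ) : (\gamma^{-1})_{|K}\in\Phi\}$ is exactly the set of embeddings of $M$ that, when restricted appropriately, recover the prime decomposition dictated by $\Phi^*$. This is a routine but slightly delicate manipulation with double cosets $\Gal(M/K)\backslash\Gal(M/\QQ)/\Gal(M/?)$; once the hypotheses force all relevant primes to be unramified of residue degree one (which is exactly what "splits completely" buys us on the $K$ side), the decomposition groups are trivial and the computation collapses to a clean product over $\Phi_M^{-1}$. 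No deep input beyond Shimura--Taniyama is needed; the work is in the reflex-type translation.
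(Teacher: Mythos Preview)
Your overall strategy is correct and matches the paper's: this is indeed Shimura--Taniyama plus splitting bookkeeping. The paper works in the large Galois field $L$ (containing the field of definition $k$ and $K$), invokes the congruence relation in the form of \cite[Chapter~3, Theorem~3.3]{Lan83} to get $\pi\cO_L = \prod_{\psi\in\Phi_L^{-1}}\fP_L^{\psi}$, and then intersects with $\cO_M$ to descend; your route via the reflex norm is an equivalent repackaging.

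However, you misidentify the role of the principality hypothesis. You write that it ``ensures $\fP_{K^*}$ is principal, so that $\pi$ can be taken in $\cO_K$ itself rather than only defining an ideal class.'' But $\pi$ is always an honest element of $\cO_K$: it is by definition the element with $\tilde\iota(\pi)$ equal to the $\mathrm{N}_{k/\QQ}(\fP)$-th power Frobenius, and $\End(A)\cong\cO_K$. The actual purpose of requiring $p$ to split completely into \emph{principal} ideals in $K^*$ is that this forces $p$ to split completely in the Hilbert class field $H$ of $K^*$, hence in the field of definition $k\subseteq H$. This gives $\mathrm{N}_{k/\QQ}(\fP)=p$ and residue degree $f(\fP_L/\fP)=1$, which is exactly the hypothesis needed for the exponent-free form of the congruence relation in \cite{Lan83}. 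Without it, the product $\prod_{\gamma}(\fP_M)^\gamma$ would acquire residue-degree exponents and the clean formula would fail. You should rework step~(iv) accordingly.
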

\begin{proof} As $p$ splits completely into principal ideals in $K^*$,
  $p$ splits completely in the Hilbert class field $H$ of $K^*$. Thus,
  as mentioned above, $p$ splits completely in the field of definition
  $k$. Therefore, $f(\fP_L/\fP) = 1$, and by \cite[Chapter 3, Thm
  3.3]{Lan83} we obtain
  \[\pi\cO_L = \prod_{\psi \in \Phi_L^{-1}}\fP_L^{\psi}\cO_L.\] Using
  the splitting conditions on $p$ and intersecting with $\cO_M$ on
  both sides, we get the desired result.
\end{proof}

Thus the CM-type determines the ideal generated by Frobenius. We will
also need a version of this statement over $\QQ_p$.
Fix an algebraic closure $\overline{\QQ}_p$ of $\QQ_p$. Let 
\[H_w = \{\phi \in \Hom(K, \overline{\QQ}_p)  : \ \phi \text{ factors
  through } K \rightarrow K_w\},\]
where $K_w$ is the completion of $K$ at the place $w$.


\begin{prop} Let $(A, \iota)$ be an abelian variety with CM by the
  full ring of integers $\cO_K$ and of CM-type $\Gamma$. Moreover,
  assume $(A, \iota)$ has a model over the $p$-adic integers
  $\ZZ_p$. If $p$ splits completely in $K$,
  $\Gamma = \{\phi : \phi \in H_v, \text{ where } v \mid \pi \cO_K\}.$
\label{thm:Frob-determines-type}
\end{prop}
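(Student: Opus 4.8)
The plan is to reduce the $p$-adic statement to the global congruence relation already established in Proposition~\ref{thm:frob-cond}, by choosing a compatible global model and matching up places. First I would observe that since $(A,\iota)$ has a model over $\ZZ_p$ and $p$ splits completely in $K$, one can find a number field $k_0$ with a prime $\fP_0$ above $p$ such that $(A,\iota)$ is defined over $k_0$, has good reduction at $\fP_0$, and the completion $(k_0)_{\fP_0}$ embeds into $\QQ_p$ compatibly with the given $\ZZ_p$-model; enlarging $k_0$ if necessary we may assume $k_0 \supseteq K^*$ and that $k_0$ is contained in the Hilbert class field $H$ of $K^*$ (or at least that $p$ splits completely into principal ideals in $K^*$, which we are free to assume as in the hypothesis chain leading to Proposition~\ref{thm:frob-cond}). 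The element $\pi \in \cO_K$ with $\tilde\iota(\pi) = \Fr$ is then the same whether computed over the residue field of $\fP_0$ or over $\overline{\FF}_p$, up to the ambiguity of which power of Frobenius, and since $p$ splits completely $\mathrm{N}_{k_0/\QQ}(\fP_0) = p$, so $\tilde\iota(\pi)$ is exactly the $p$-power Frobenius.

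Next I would translate the conclusion of Proposition~\ref{thm:frob-cond} into a statement about the prime factorization of $\pi\cO_K$ itself. Since $p$ splits completely in $K$, we have $p\cO_K = \prod_{v \mid p} v$ with each $v$ of residue degree $1$, and the primes $v \mid p$ are in bijection with $\Hom(K,\QQ_p)$ via $v \mapsto H_v$ (each $H_v$ a singleton in this split case, but more robustly: $\phi \in H_v$ iff $\phi$ induces the place $v$). The congruence relation $\pi\cO_M = \prod_{\gamma \in \Phi_M^{-1}}\fP_M^\gamma$ descends, upon intersecting with $\cO_K$ and tracking which primes of $M$ lie over which $v$, to an identity $\pi\cO_K = \prod_{v \in T} v^{n_v}$ for an explicit set $T$; the heart of the bookkeeping is to check that $v$ occurs (with exponent $1$, since $\pi$ and $\bar\pi$ together generate $p\cO_K$) precisely when the embeddings factoring through $K \to K_v$ lie in $\Gamma$. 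Here one uses the standard dictionary: an element $\gamma \in \Gal(M/\QQ)$ with $(\gamma^{-1})_{|K} \in \Phi$ contributes the prime $\fP_M^\gamma$, whose restriction to $K$ is the place corresponding to the embedding $(\gamma^{-1})_{|K}$ (after fixing $\fP_M$ and the resulting identification of decomposition data with embeddings into $\overline{\QQ}_p$). Unwinding this shows $v \mid \pi\cO_K \iff H_v \subseteq \Gamma$, which, combined with $\Gamma \sqcup \rho\Gamma = \Hom(K,\overline\QQ_p)$ and $v \sqcup \bar v$ exhausting the primes over $p$, is exactly the claimed description $\Gamma = \{\phi : \phi \in H_v,\ v \mid \pi\cO_K\}$.

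The last step is to note that the conclusion only involves the $p$-adic CM-type, so it is insensitive to the choice of global model $k_0$: two models differ by an isomorphism over $\overline{\QQ}_p$, which identifies their Frobenius elements and their decomposition data compatibly, so the set $\{v : v \mid \pi\cO_K\}$ and the partition of $\Hom(K,\overline\QQ_p)$ by the $H_v$ are intrinsic. I would also remark that the hypothesis "$p$ splits completely into principal ideals in $K^*$" needed to invoke Proposition~\ref{thm:frob-cond} can be arranged after a base change that does not affect the $p$-adic type (it changes neither $K$, nor $\pi\cO_K$ as an ideal of $\cO_K$, nor the places $H_v$).

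I expect the main obstacle to be the bookkeeping in the second paragraph: keeping straight the three layers of fields ($K$, $K^*$, and the Galois closure $M$), the choices of primes $\fP_M \mid \fP_{K^*} \mid p$, and the precise compatibility between "Galois element $\gamma$ restricting into $\Phi$" and "embedding into $\overline{\QQ}_p$ lying in $H_v$" — in particular getting the inverse/conjugate conventions to line up so that $\Phi_M^{-1}$ produces $\Gamma$ and not $\rho\Gamma$. The analytic and algebraic inputs are all in hand (Shimura--Taniyama via Proposition~\ref{thm:frob-cond}, Milne's results on $\fa$-multiplications are not even needed here); the work is purely in the careful passage between global and local decomposition groups.
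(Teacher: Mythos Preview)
Your approach is genuinely different from the paper's, and it contains a real gap.

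The paper does not go through the global congruence relation at all. It invokes Tate's local Shimura--Taniyama formula \cite[Lemme~5]{Tat71},
\[
\frac{v(\pi)}{v(q)} \;=\; \frac{\#(\Gamma\cap H_v)}{[K_v:\QQ_p]},
\]
and observes that when $p$ splits completely in $K$ one has $q=p$, $[K_v:\QQ_p]=1$, and $\#H_v=1$ for every $v\mid p$; hence $v(\pi)\in\{0,1\}$ and $v(\pi)=1$ iff $H_v\subset\Gamma$. That is the whole proof: two lines, entirely local, no number field, no reflex field, no descent from $M$ to $K$.

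The gap in your route is the handling of the hypothesis ``$p$ splits completely into principal ideals in $K^*$'' needed for Proposition~\ref{thm:frob-cond}. You assert this ``can be arranged after a base change that does not affect the $p$-adic type,'' but $K^*$ is the reflex field of $(K,\Phi)$: it depends only on the CM datum, not on any field of definition of $A$. No base change of the variety alters how $p$ factors in $K^*$. So as a proof of Proposition~\ref{thm:Frob-determines-type} as stated (which assumes only that $p$ splits completely in $K$), your argument does not go through. A secondary issue is that your first step---producing a number field $k_0$ with a prime $\fP_0$ of residue degree $1$ over $p$ such that the given $\ZZ_p$-model descends to a model over $k_0$ with good reduction at $\fP_0$---is not automatic and you do not justify it.

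Your strategy can be repaired: rather than quoting Proposition~\ref{thm:frob-cond}, you could invoke the general Shimura--Taniyama congruence \cite[Chapter~3, Theorem~3.3]{Lan83} directly over a field of definition in which the chosen prime has residue degree~$1$ (arranged via the $\ZZ_p$-model), and then carry out the descent from $L$ to $K$. But this is exactly the detour that Tate's local formula is designed to avoid, and the bookkeeping you anticipate in your final paragraph is eliminated entirely by the paper's approach.
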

\begin{proof}
  By \cite[Lemme 5]{Tat71}, $\dfrac{v(\pi)}{v(q)} =
  \dfrac{\textsl{Card}(\Gamma \cap H_v)}{[K_v : \QQ_p]}$. If $p$ splits
  completely in $K$, then $[K_v : \QQ_p] = 1$ for all $v \mid p$ and $q = p$. This gives
  $v(\pi) = \textsl{Card}(\Gamma \cap H_v).$

  Also, as $p$ splits completely in $K$, there is only one embedding
  $K \rightarrow K_v$ for every $v \mid p$. Thus $\textsl{Card}(H_v) = 0$
  or $1$, and $\textsl{Card}(\Gamma \cap H_v) = 1$ if and only if
  $v(\pi)=1$.
\end{proof}

\section{Invariants of Picard curves}
\label{sec:invariants}
In this section, we discuss invariants for Picard
curves. Recall, if $y^3 = f(x)$ where $\deg(f) = 4$ and $f$ has no repeated
roots over the algebraic closure, then this defines a smooth curve
known as a {\em Picard curve}. Assume $L$ is a field of characteristic not 2
or 3, and let $C$ be a Picard curve over $L$. We can express the curve
$C$ in the form $y^3 = x^4 + g_2x^2 + g_3x + g_4$. This is called the {\em normal form} of the curve \cite[Appendix 1,
Definition 7.6]{Hol95}.

As in \cite[Section 1]{KLS18}, we define the
following three invariants for a Picard curve in normal form as
$j_1 := g_2^3/g_3^2, j_2 := g_2g_4/g_3^2, j_3 := g_4^3/g_3^4$.

We can write down a model for the curve with given invariants as follows:

{\it Case 1:} If $j_1 \ne 0$, then $C: y^3 = x^4 + j_1x^2 + j_1x + j_1j_2$.

{\it Case 2:} If $j_1 = 0, j_3 \ne 0$, then $C: y^3 = x^4 + j_3^2x + j_3^3$.

{\it Case 3:} If $j_1 = 0, j_2 = 0, j_3 = 0$, then $C: y^3 = x^4 + x$.

If $g_3 = 0$, then $C$ is a double cover of an elliptic curve (see
\cite[Lemma 2.1]{KLS18} and \cite[Theorem 2.4]{KLS18}). Thus the
invariants for a Picard curve $C$ whose Jacobian is simple are always
defined. This gives us the following proposition.

\begin{prop} Let $C$ be a Picard curve over a field $L$ of
  characteristic not 2 or 3 with $\Jac(C)$ simple. Assume that the
  three invariants $j_i(C)$ are defined over a subfield $k$ of
  $L$. Then $C$ has a model as a Picard curve over $k$.
\label{thm:model-where-invs-defined}
\end{prop}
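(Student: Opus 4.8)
The plan is to produce an explicit Picard curve over $k$ in normal form whose invariants match those of $C$, and then show this model is isomorphic to $C$ over $L$. First I would observe that since $\Jac(C)$ is simple, the preceding discussion (via \cite[Lemma 2.1]{KLS18}, \cite[Theorem 2.4]{KLS18}) guarantees that in the normal form $y^3 = x^4 + g_2x^2 + g_3x + g_4$ of $C$ over $L$ we have $g_3 \neq 0$; otherwise $C$ would be a double cover of an elliptic curve and its Jacobian would not be simple. Hence all three invariants $j_1, j_2, j_3$ are defined, and by hypothesis they lie in $k$. Now I would split into the three cases listed just before the statement: depending on whether $j_1 \neq 0$, or $j_1 = 0$ with $j_3 \neq 0$, or $j_1 = j_2 = j_3 = 0$, I write down the corresponding explicit model $C'$ over $k$ from Cases 1--3. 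Each such $C'$ is manifestly a Picard curve defined over $k$ (one checks the quartic on the right has no repeated roots, which follows from the model being built to have the correct, well-defined invariants).

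The key step is then to verify that $C'$ and $C$ have the same invariants $j_i$ and to conclude they are $L$-isomorphic. For the first part I would just compute the $j_i$ of the given models in Cases 1--3 and check they return the prescribed values $j_1, j_2, j_3$; this is a direct substitution into the definitions $j_1 = g_2^3/g_3^2$, $j_2 = g_2 g_4/g_3^2$, $j_3 = g_4^3/g_3^4$. For the second part, I would invoke the fact that the triple $(j_1, j_2, j_3)$ is a complete isomorphism invariant for Picard curves with $g_3 \neq 0$ over an algebraically closed field (equivalently, over $L$ once we know a model over $L$ exists) — two Picard curves in normal form with $g_3 \neq 0$ are isomorphic iff their $j_i$ agree. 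Since both $C$ and (the base change to $L$ of) $C'$ are such curves with equal invariants, they are isomorphic over $L$. Therefore $C' $ is the desired model of $C$ over $k$.

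The main obstacle I anticipate is making precise and justifying the claim that $(j_1,j_2,j_3)$ determines the Picard curve up to isomorphism over $L$ (not merely over $\overline{L}$), together with handling the automorphisms/normalization: the normal form is obtained by a change of variables $x \mapsto x + c$, $y \mapsto uy$, $x \mapsto vx$ that may a priori require roots of unity or other elements not in $k$, so one must check that the reduction to normal form and the matching of models can be carried out $k$-rationally. The cleanest route is probably to note that the models in Cases 1--3 are already "rigid" — scaling $x$ and $y$ to absorb the remaining freedom is forced by requiring monic leading coefficient $x^4$ and the specified shape — so that two normalized models with equal invariants differ by a transformation defined over the field generated by the invariants, i.e. over $k$. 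I would cite \cite{KLS18} for the statement that these $j_i$ form a complete set of invariants and for the classification of the relevant automorphism group, and then the descent of the isomorphism to $k$ follows formally. The genuinely hyperelliptic degenerate case $g_3 = 0$ is excluded by simplicity of $\Jac(C)$, so no separate argument is needed there.
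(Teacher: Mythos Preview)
Your approach is exactly the paper's: the proposition is presented as an immediate consequence of the preceding three-case recipe (the sentence ``This gives us the following proposition'' is the entire argument), so writing down the explicit $C'/k$ from Cases~1--3 with the prescribed invariants is all that is done or needed.

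The obstacle you flag in your last paragraph is real, but only for a stronger statement than the paper intends. Picard curves admit cubic twists (since $\mu_3 \subseteq \Aut(C)$ via $y \mapsto \zeta_3 y$), so two Picard curves over $L$ with identical $(j_1,j_2,j_3)$ need not be $L$-isomorphic; your proposed ``rigidity'' route will not go through in general, and the statement read in the strict sense of ``$C'_L \cong C$ over $L$'' is simply false. However, the paper only ever uses ``model over $k$'' in the geometric sense---note the word ``geometrically'' in Lemma~\ref{thm:pic-model-hcf}, and that the class polynomials throughout are indexed by $\CC$- or $\overline{\FF}_p$-isomorphism classes. With that reading you only need $C'_{\overline{L}} \cong C_{\overline{L}}$, which follows at once from the $j_i$ being a complete set of invariants over an algebraically closed field (\cite{KLS18} or \cite[Appendix~1]{Hol95}); no descent argument is required, and you can drop the last paragraph entirely.
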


Goren and Lauter showed that for genus 2 curves which have CM by a
given primitive, quartic, CM-field $K$ one can bound the primes
occurring in the denominators of the Igusa class polynomials in terms of a
value depending on $K$ \cite{GL07}. They obtain this bound by relating
the primes occurring in the denominators to primes of bad reduction of
the curves. For genus 3 curves with CM by a sextic CM-field $K$, a
bound on the primes of bad reduction in terms of a value depending on
$K$ was obtained in \cite{BCLGMNO15} and \cite{KLLNOS16}. A bound on
the primes occurring in the denominators of the above invariants of
Picard curves was obtained in \cite{KLS18}. 

We will need the following condition for Picard curves.

\begin{prop} Let $K=\QQ(\mu)$ be a sextic CM-field, $\Phi$ a primitive
  CM-type on $K$ and $p$ be a rational prime that splits completely in
  $K$. Let $C$ be a genus 3 curve defined over a number field $M$ with
  CM by the maximal order $\cO_K$ of $K$ and with type $\Phi$. Let $\fP$
  be a prime of $M$ above $p$. Then $C$ has potential good reduction
  at $\fP$. Moreover, if $C$ is a Picard curve then
  $v_\fP(j_i(C)) \ge 0$ for all invariants $j_i$.
\label{thm:bad-red-cond}
\end{prop}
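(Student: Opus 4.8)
The plan is to prove the two assertions separately: first the potential-good-reduction claim for the abelian variety $\Jac(C)$, and then the nonnegativity of the valuations of the Picard invariants, which is where the hypothesis that $C$ is a Picard curve (as opposed to an arbitrary genus $3$ CM curve) really enters.

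For potential good reduction, I would argue at the level of the Jacobian. Since $C$ has CM by $\cO_K$ with primitive CM-type $\Phi$, the abelian variety $A = \Jac(C)$ has good reduction at every prime after a finite base extension by the theory of CM abelian varieties (Serre--Tate); more precisely, $A$ acquires everywhere good reduction over the Hilbert class field of the reflex field $K^*$, which is a field of definition for $(A,\iota)$ by \cite[Proposition 2.1]{Gor97} as used in Section~\ref{sec:cong-rel}. Passing from good reduction of the Jacobian to potential good reduction of the curve is standard in genus up to $3$: a smooth projective genus $3$ curve has good reduction if and only if its Jacobian does, by a theorem of Oort--Ueno (or, in the hyperelliptic case, Deligne--Mumford stable reduction together with the Torelli map). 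So after a further finite extension of $M$ the model of $C$ specializes to a smooth genus $3$ curve, giving potential good reduction at $\fP$.

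For the second assertion, suppose $C$ is a Picard curve. After a finite extension $M'/M$ as above, $C$ has good reduction at a prime $\fP'$ above $\fP$, so there is a smooth genus $3$ curve $\widetilde{C}$ over the residue field with $\Jac(\widetilde{C}) \cong \widetilde{A}$. Because $p$ splits completely in $K$, reduction modulo $\fP'$ induces an injection $\cO_K \hookrightarrow \End(\widetilde{A})$, and $\widetilde{A}$ is ordinary (by the Shimura--Taniyama relation: the Frobenius $\pi$ has the right valuation profile, cf. Proposition~\ref{thm:Frob-determines-type}), hence in particular $\widetilde{A}$ is not supersingular. The key point is then to show that $\widetilde{C}$ is again a Picard curve — i.e. that the property of being a cyclic triple cover of $\PP^1$ branched at four points is preserved under good reduction away from $2$ and $3$. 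This follows because the automorphism $\zeta_3 \in \cO_K$ acting on $C$ reduces to an automorphism of order $3$ of $\widetilde{C}$ (reduction is injective on automorphisms in residue characteristic prime to the order), and a genus $3$ curve over a field of characteristic $\ne 2,3$ with an order-$3$ automorphism whose quotient has genus $0$ is exactly a Picard curve; the quotient genus can only drop or stay the same under specialization, and simplicity of $\widetilde{A}$ (or at least the absence of the degenerate $g_3 = 0$ locus, by Proposition~\ref{thm:model-where-invs-defined} and \cite[Lemma 2.1, Theorem 2.4]{KLS18}) rules out the exceptional case. Once $\widetilde{C}$ is known to be a Picard curve with simple Jacobian, its invariants $j_i(\widetilde{C})$ are defined and finite by Section~\ref{sec:invariants}, and since $j_i$ is a ratio of polynomials in the normal-form coefficients $g_2,g_3,g_4$ that specializes well, $v_{\fP'}(j_i(C)) \ge 0$; as $\fP'$ lies over $\fP$ and $j_i(C) \in M$, this gives $v_\fP(j_i(C)) \ge 0$.

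The main obstacle is the step showing the reduction $\widetilde{C}$ is still a Picard curve, and in particular that we do not land in the degenerate case $g_3 = 0$ where $\Jac(\widetilde{C})$ decomposes — one must leverage that $\widetilde{A}$ is simple, or at least argue directly from the structure of the order-$3$ quotient map, using that $C$ itself has $g_3 \ne 0$ (guaranteed by $\Jac(C)$ simple via \cite[Theorem 2.4]{KLS18}) and that the $\mu_3$-cover structure reduces without collapsing the branch locus in characteristic $\ne 2, 3$. Controlling the branch divisor — ensuring the four branch points stay distinct mod $\fP$, equivalently that $f$ does not pick up a repeated root — is the crux, and it is precisely here that potential good reduction of the curve (already established in the first part) is what forces the branch points to remain distinct.
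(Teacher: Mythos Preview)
Your argument for potential good reduction contains a genuine gap. The assertion that ``a smooth projective genus $3$ curve has good reduction if and only if its Jacobian does'' is false as stated: a curve can have bad stable reduction while its Jacobian has good reduction --- the reduced Jacobian is then the product of the Jacobians of the irreducible components of the stable curve, which is a perfectly good abelian variety. Oort--Ueno only tells you that an \emph{indecomposable} principally polarized abelian threefold is a Jacobian; it is not a reduction criterion. What is missing is precisely the hypothesis that $p$ splits completely in $K$, which you never invoke in this paragraph. The paper argues by contradiction: if $C$ has bad stable reduction then $\widetilde{\Jac(C)} \cong E \times A$ as principally polarized abelian varieties (\cite[Proposition~4.2, Corollary~4.3]{BCLGMNO15}), whereas the splitting hypothesis forces $\widetilde{\Jac(C)}$ to be geometrically simple with CM by $\cO_K$ (via \cite[Chapter~3, Theorem~2]{Shi98}, ordinarity from \cite{Sug14}, and \cite[Theorem~7.2]{Wat69} to pass to $\overline{\FF}_p$). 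Without this simplicity input, the step from good reduction of $\Jac(C)$ to good reduction of $C$ simply does not go through.

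For the second assertion your route --- reduce the $\zeta_3$-automorphism, deduce that $\widetilde{C}$ is again a Picard curve with $g_3 \ne 0$, and then read off integrality of the $j_i$ from the reduction --- can be made to work once the first part is repaired, but it is more elaborate than necessary. The paper argues directly by contradiction: if $v_\fP(j_i(C)) < 0$ for some $i$, then \cite[Lemma~2.1]{KLS18} shows that the reduction of $\Jac(C)$ contains two nontrivial abelian subvarieties, again contradicting the simplicity forced by the splitting of $p$. This sidesteps the branch-locus and $g_3=0$ analysis you flag as the main obstacle; the paper outsources exactly that obstacle to the cited lemma, while you are in effect reproving a piece of it by hand.
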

\begin{proof}
  Assume $C$ has geometrically bad reduction modulo a prime $\fP$ of
  $M$ above the rational prime $p$. After possibly extending $M$, we
  may assume that $C$ has a stable model over $M$ and $\Jac(C)$ has
  good reduction over $M$. The stable reduction $\tilde{C}$ has at
  least 2 irreducible components \cite[Proposition
  4.2]{BCLGMNO15}. $\widetilde{\Jac(C)}$ is isomorphic as a polarized
  abelian variety to the product of the Jacobians of the irreducible
  components of $\tilde{C}$. That is, $\widetilde{\Jac(C)}$ is
  isomorphic as a principally polarized abelian variety to
  $E \times A$ \cite[Corollary 4.3]{BCLGMNO15}, where $E$ is an
  elliptic curve and $A$ is a two-dimensional principally polarized
  abelian variety. However, as $p$ splits completely in $K$, the
  reduction modulo $\fP$ of $\Jac(C)$ must be simple with CM by $K$ by
  \cite[Chapter 3, Theorem 2]{Shi98}. By~\cite[Theorem 1.2]{Sug14}s
  $\widetilde{\Jac(C)}$ is ordinary, so
  $\End(\widetilde{\Jac(C)}) \otimes \QQ$ is unchanged after base
  extension by \cite[Theorem 7.2]{Wat69}. Therefore
  $\widetilde{\Jac(C)}$ is geometrically simple as the endomorphism
  ring tensored with $\QQ$ is a field. This is a contradiction,
  so $C$ must have potential good reduction.

  Now assume that $C$ is a Picard curve and that $v_\fP(j_i(C)) < 0$
  for some $j_i$. After possibly extending $M$, we may assume that
  $\Jac(C)$ has good reduction modulo $\fP$. Then the reduction of
  $\Jac(C)$ modulo $\fP$ has two non-trivial
  abelian subvarieties by \cite[Lemma 2.1]{KLS18}. However, as $p$
  splits completely in $K$, we again obtain a contradiction.
\end{proof}
\begin{remark} It was pointed out to the authors by some of the
  anonymous referees and by Marco Streng that a similar condition to
  the above proposition was given in \cite[Proposition 4.1]{KLLRSS17} when
  the field $K/\QQ$ is cyclic Galois.
\end{remark}

\begin{remark}\label{sec:enum-isom-classes} To generate representatives for all distinct isomorphism classes, we
  use the invariants described in \cite[Section 4]{KW05}. To see that
  this enumerates all isomorphism classes of Picard curves with no
  repetitions see \cite[Appendix 1, Section 7.5]{Hol95}.
\end{remark}

\section{Reduction of class polynomials}
\label{sec:red-of-class}

Fix a sextic CM-field $K$ containing the cube roots of unity and a primitive CM-type $\Phi$ on $K$.
In the introduction we defined class polynomials $H_i^\Phi$ for $i=1,...,3$,
\[H_i^\Phi := \prod (X - j_i(C)),\]
where the product runs over all isomorphism classes of Picard curves defined over $\CC$ whose
Jacobian has complex multiplication by $\cO_K$ and of type
$\sigma \Phi$ for some $\sigma \in \Gal(\overline{\QQ}/\QQ)$.
\begin{remark} 
If one wants to use the class polynomials above to construct Picard
curves over $\CC$ with $\End(\Jac(C)) \cong \cO_K$, then one
needs to match up the roots of the three polynomials to obtain a triple
of roots $(j_1,j_2,j_3) $ that corresponds to such a curve. 
In genus 2, alternate class polynomials were proposed based on
Lagrange interpolation that prescribe which roots of the second and
third Igusa class polynomials to choose once the first has been
chosen\cite[Section 3]{GHKRW06}. These polynomials only work if the
first Igusa class polynomial has simple roots. For a discussion of
resolving this issue in genus 2 see \cite[Chapter III, Section
5]{Str10}.
\end{remark}

We will show that under suitable restrictions on the prime $p$, the
reduction modulo $p$ of these polynomials $H_i^\Phi$ is
\[H_{i,p} := \prod (X - j_i(C)),\]
where the product runs over all $\overline{\FF}_p$-isomorphism classes
of Picard curves $C$ which arise as the reduction of Picard curves over
$\CC$ that have complex multiplication by $\cO_K$ and type
$\sigma \Phi$ for some $\sigma \in \Gal(\overline{\QQ}/\QQ)$.

First we describe when a principally polarized abelian variety is the
Jacobian of a Picard curve.  

In the following, whenever we assume that
a field $F$ contains the cube roots of unity, it is also implied that
$F$ does not have characteristic 3.

\begin{lemma} Let $(A, \mathcal{C})$ be a simple, principally polarized
  abelian variety of dimension 3 over a perfect field $H$ which
  contains the cube roots of unity. In addition, assume $(A,\mathcal{C})$
  has complex multiplication by $K$ with $\QQ(\zeta_3) \subset
  K$. Then $(A, \mathcal{C})$ is geometrically the Jacobian of a Picard
  curve $C$ which has a model over $H$.
\label{thm:pic-model-hcf}
\end{lemma}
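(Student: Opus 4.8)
The plan is to combine two ingredients: first, the fact that over an algebraically closed field a principally polarized abelian threefold which is not a product of lower-dimensional principally polarized abelian varieties is the Jacobian of a smooth curve of genus 3 (Oort--Ueno), and second, the fact that a genus 3 curve whose Jacobian admits $\ZZ[\zeta_3] \subseteq \cO_K$ in its endomorphism ring must be a Picard curve, because the automorphism $\zeta_3$ forces the curve to be a cyclic triple cover of $\PP^1$. Finally I would use Proposition~\ref{thm:model-where-invs-defined} (descent of the Picard curve via its invariants) to get a model over $H$ itself.

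First I would pass to $\overline{H}$. Since $(A,\mathcal{C})$ is simple of dimension 3, it is indecomposable as a principally polarized abelian variety, so by the Oort--Ueno theorem it is $\overline{H}$-isomorphic, as a ppav, to $\Jac(C)$ for some smooth projective genus 3 curve $C$ over $\overline{H}$; here $C$ is either hyperelliptic or a smooth plane quartic. Next I would use the complex multiplication: the inclusion $\QQ(\zeta_3)\subseteq K$ and $\End(A)\cong\cO_K$ (or at least $\cO_K \hookrightarrow \End(A)$) give an automorphism $\alpha$ of order 3 on $A$ fixing $\mathcal{C}$, hence an order-3 automorphism of $C$ that is not the hyperelliptic involution and commutes appropriately with the polarization. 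Analyzing the quotient $C/\langle\alpha\rangle$ via Riemann--Hurwitz (using that $\alpha$ has order 3 and that $g(C)=3$), one finds $C/\langle\alpha\rangle \cong \PP^1$ with exactly the ramification profile of a Picard curve $y^3 = f(x)$ with $\deg f = 4$ and $f$ separable — in particular $C$ is not hyperelliptic, and is a plane quartic of Picard type. This identifies $(A,\mathcal{C})$ geometrically with the Jacobian of a Picard curve. I would either cite \cite[Lemma 1]{KW05} or \cite{KLS18} for this last structural statement rather than redo the Riemann--Hurwitz bookkeeping, since it is exactly the genus-3 analogue of what is used in the introduction.

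For the model over $H$: since $A$ and its polarization are defined over $H$ and $H$ is perfect, the curve $C$ (which is recovered from $\Jac(C)$ with its principal polarization by Torelli) is defined over $H$ up to a possible quadratic twist for the hyperelliptic case — but we have just shown $C$ is a non-hyperelliptic Picard curve, and for plane quartics Torelli together with $H$ perfect gives a model of $C$ over $H$. Alternatively, and more in the spirit of the paper, since $\Jac(C)$ is simple the invariants $j_i(C)$ of Proposition~\ref{thm:model-where-invs-defined} are all defined, and Galois descent shows they lie in $H$ (they are fixed by $\Gal(\overline{H}/H)$ because the isomorphism class of $(A,\mathcal{C})$, hence of $C$ by Torelli, is), so Proposition~\ref{thm:model-where-invs-defined} directly produces a Picard-curve model of $C$ over $H$.

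\textbf{Main obstacle.} The delicate point is the descent of the curve from $\overline{H}$ to $H$: Torelli only identifies $C$ up to twist, and for genus 3 the twisting behavior differs between the hyperelliptic and non-hyperelliptic loci. The clean resolution is to first establish geometrically that $C$ is a (non-hyperelliptic) Picard curve — so that its three invariants $j_i$ are well defined and Galois-stable — and then invoke Proposition~\ref{thm:model-where-invs-defined} to descend; this sidesteps the twist issue entirely. A secondary technical point is making sure the order-3 automorphism coming from $\zeta_3 \in \cO_K$ really descends to an automorphism of the curve compatible with the polarization (so that Torelli applies), which follows from the functoriality of the Torelli map on principally polarized abelian varieties.
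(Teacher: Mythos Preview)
Your proposal is correct and follows essentially the same route as the paper: cite \cite[Lemma~1]{KW05} to identify $(A,\mathcal{C})$ geometrically with the Jacobian of a Picard curve over an extension, then use Torelli to show the invariants $j_i(C)$ are $\Gal(\overline{H}/H)$-stable, and finally invoke Proposition~\ref{thm:model-where-invs-defined} to descend. The paper's proof is exactly your ``alternatively, and more in the spirit of the paper'' paragraph; your additional discussion of Oort--Ueno and the Riemann--Hurwitz analysis of the $\zeta_3$-action merely unpacks what \cite[Lemma~1]{KW05} already packages, and your first descent option (direct Torelli descent for non-hyperelliptic curves) is a valid variant the paper does not use.
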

\begin{proof}
  By \cite[Lemma 1]{KW05},
  $(A, \mathcal{C})$ is the Jacobian of a Picard curve $C$ after we
  base change to a finite extension $L$ of $H$.  After possibly
  another finite extension, we may assume $L$ is Galois over $H$. Let
  $\sigma \in \Gal(L/H)$, then
  $\Jac(C^\sigma) \cong_L \Jac(C)^\sigma$. As $\Jac(C)$ has a model
  over $H$, $\Jac(C^\sigma) \cong_L \Jac(C)$.

  Hence by Torelli's theorem, $C \cong_{L} C^\sigma$. So
  $j_i(C) = j_i(C^\sigma) = j_i(C)^\sigma$, $i=1,...,3$. Therefore the
  invariants $j_i(C)$ are defined over $H$. As the invariants $j_i(C)$
  are defined over $H$, Proposition~\ref{thm:model-where-invs-defined}
  implies that $C$ has a model over $H$.
\end{proof}

Before we discuss reductions of our class polynomials, we need the following. 


\begin{prop}
$H_1^\Phi,H_2^\Phi,H_3^\Phi$ are polynomials defined over $\QQ$.
\end{prop}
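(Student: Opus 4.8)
The plan is to show that each $H_i^\Phi$ is fixed by the natural action of $\Gal(\overline{\QQ}/\QQ)$ on its coefficients, which, combined with the fact that the coefficients are algebraic numbers, forces them to lie in $\QQ$. First I would observe that each invariant $j_i(C)$ appearing in the product is an algebraic number: the curve $C$ has CM by $\cO_K$, hence is defined over a number field (for instance over the Hilbert class field of the reflex field $K^*$, as recalled in Section~\ref{sec:cong-rel}), and the invariants are rational functions of the coefficients of a normal-form model, so they lie in $\overline{\QQ}$. Consequently each $H_i^\Phi \in \overline{\QQ}[X]$, and it suffices to check Galois-invariance of the coefficients.

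Next I would unwind the action of $\tau \in \Gal(\overline{\QQ}/\QQ)$ on the set indexing the product. If $C/\CC$ (really $C$ over a number field) is a Picard curve whose Jacobian $\Jac(C)$ has CM by $\cO_K$ of type $\sigma\Phi$ for some $\sigma \in \Gal(\overline{\QQ}/\QQ)$, then the conjugate curve $C^\tau$ is again a Picard curve, and $\Jac(C^\tau) \cong \Jac(C)^\tau$ has CM by $\cO_K$ of type $\tau\sigma\Phi$ (this is exactly the observation quoted in the introduction: for $A$ of type $(K,\Phi)$ and $\sigma \in \Gal(\overline{\QQ}/\QQ)$, $A^\sigma$ is of type $(K,\sigma\Phi)$; one checks the CM structure conjugates appropriately). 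Since $\tau\sigma$ again ranges over $\Gal(\overline{\QQ}/\QQ)$ as $\sigma$ does, the curve $C^\tau$ is again in the indexing set. Moreover $C \mapsto C^\tau$ is a bijection on isomorphism classes (with inverse $C \mapsto C^{\tau^{-1}}$), since an isomorphism $C_1 \cong C_2$ over $\overline{\QQ}$ conjugates to an isomorphism $C_1^\tau \cong C_2^\tau$. Finally, $j_i(C^\tau) = j_i(C)^\tau$ because $j_i$ is defined by a polynomial identity with coefficients in $\QQ$ (indeed in $\ZZ[1/6]$) applied to the coordinates of a model of $C$, and such identities are preserved under field automorphisms.

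Putting this together: applying $\tau$ to all coefficients of $H_i^\Phi = \prod (X - j_i(C))$ permutes the factors via $X - j_i(C) \mapsto X - j_i(C)^\tau = X - j_i(C^\tau)$, and since $C \mapsto C^\tau$ is a bijection of the indexing set onto itself, the product $H_i^\Phi$ is unchanged. Hence every coefficient of $H_i^\Phi$ is fixed by all of $\Gal(\overline{\QQ}/\QQ)$, so lies in $\QQ$, giving $H_i^\Phi \in \QQ[X]$.

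The main subtlety — and the step I would be most careful about — is the compatibility of the CM-type under Galois conjugation, namely that the CM-structure $\iota: \cO_K \hookrightarrow \End(\Jac(C))$ conjugates to a CM-structure $\iota^\tau$ whose type is $\tau\sigma\Phi$ rather than something twisted by an automorphism of $K$; one must fix the embedding $K \hookrightarrow \overline{\QQ}$ once and for all and track how $\tau$ acts on the set of embeddings $\Hom(K,\overline{\QQ})$ relative to that choice. A second, more bookkeeping-level point is ensuring the product is genuinely finite and that ``isomorphism class'' is taken over $\overline{\QQ}$ (equivalently $\CC$), so that the bijection $C \mapsto C^\tau$ and the well-definedness of $j_i$ on isomorphism classes both hold; finiteness follows because there are only finitely many isomorphism classes of abelian threefolds with CM by $\cO_K$ of a given type, and only finitely many relevant types $\sigma\Phi$ (at most $[K:\QQ]!$, and in fact only the $\Gal$-orbit of $\Phi$ matters).
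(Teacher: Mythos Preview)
Your argument is correct and follows essentially the same approach as the paper: both proofs observe that each $C$ is defined over a number field so the $j_i(C)$ are algebraic, then use that $\Jac(C^\tau)$ has CM-type $(K,\tau\sigma\Phi)$ (the paper cites \cite[Chapter 3, Theorem 1.2]{Lan83} for this) to see that $\tau$ permutes the indexing set, and note finiteness to conclude $H_i^\Phi \in \QQ[X]$. Your write-up is more explicit about the bijectivity of $C\mapsto C^\tau$ and the identity $j_i(C^\tau)=j_i(C)^\tau$, and you correctly flag the CM-type-under-conjugation compatibility as the one step requiring care, but the underlying strategy is the same.
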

\begin{proof}
  Every abelian variety with CM by $K$ has a model over a number
  field. Thus, by \cite[Theorem 4]{OU73}, the curve $C$ is also
  defined over a number field. So if
  $\sigma \in \Gal(\overline{\QQ}/\QQ)$ is an automorphism, then the
  tuple of invariants $j_i(C)^\sigma$ corresponds to the curve
  $C^\sigma$. But if $\Jac(C)$ has CM-type $(K, \Phi)$ under some
  embedding $\iota: K \hookrightarrow \End(\Jac(C)) \otimes \QQ$, then
  $\Jac(C^\sigma)$ has CM-type $(K, \sigma \Phi)$ by
  \cite[Chapter 3, Theorem 1.2]{Lan83}. The number of roots of the $H_i^\Phi$ is finite as
  there are only finitely many principally polarized abelian varieties
  with endomorphism ring isomorphic to $\cO_K$ of type $\sigma\Phi$ \cite[Chapter 3, Corollary
  2.7]{Lan83}, so the $H_i^\Phi$ are polynomials defined over $\QQ$.
\end{proof}

%
%

We will use the abbreviation p.p.a.v.\ for a principally polarized
abelian variety. For a CM-field $K$ of degree $2g$ over $\QQ$,
let
\[\Isom{K}{\Phi} = \{ \CC\text{-isomorphism classes of simple p.p.a.v.}
  \text{ with CM by } \cO_K \text{ of type } \Phi\}.\]\label{defn:isom-k-phi} 
The abelian varieties in this set are of dimension $g$.
By \cite[Proposition 2.1]{Gor97}, every p.p.a.v.\ $(A, \cC)$
representing an isomorphism class in $\Isom{K}{\Phi}$ has a model over
the Hilbert class field $H$ of the reflex field $K^*$ which has good
reduction modulo any prime $\fP$ of $H$. By~\cite [Chapter II,
Proposition 6.7]{Mil06}, the reduction of the polarization $\cC$ is a
polarization on the reduced variety $\tilde{A}$. If $p$ splits
completely into principal ideals in $K^*$ then $p$ splits completely
into principal ideals in $H$. Thus, the reduction $(A_\fP, \cC_\fP)$
of $(A, \cC)$ modulo $\fP$ has a model over $\FF_p$. Denote by
$\Isomr{K}{\Phi}$ the set of $\overline{\FF}_p$-isomorphism classes
occurring in this way. That is,
\[\Isomr{K}{\Phi} =
\{\overline{\FF}_p\text{-isomorphism classes of p.p.a.v.'s } (A_\fP, \cC_\fP)/\FF_p \mid (A, \cC) \in \Isom{K}{\Phi} \}.
\]
\label{defn:isomr-k-phi} 
\begin{prop} Let $\sigma \in \Gal(\overline{\QQ}/\QQ)$. If
  $\Phi\gamma = \sigma\Phi$ for some $\gamma \in Aut(K/\QQ)$, then
  $\Isom{K}{\Phi}$ and $\Isom{K}{\sigma\Phi}$ are equal. Otherwise,
  $\Isom{K}{\Phi}$ and $\Isom{K}{\sigma\Phi}$ are disjoint.
	\label{thm:inequivalent-types}
\end{prop}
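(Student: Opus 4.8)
The plan is to compare the two sets $\Isom{K}{\Phi}$ and $\Isom{K}{\sigma\Phi}$ by tracking CM-types through the basic dictionary of complex multiplication: a simple p.p.a.v.\ with CM by $\cO_K$ is determined up to $\CC$-isomorphism by its CM-type together with the usual lattice/polarization data, and two CM-types $(K,\Phi)$ and $(K,\Psi)$ give the same collection of abelian varieties precisely when they differ by an automorphism of $K$, i.e.\ $\Psi = \Phi\gamma$ for some $\gamma \in \Aut(K/\QQ)$. So the whole statement will follow once I relate membership in $\Isom{K}{\sigma\Phi}$ to having CM-type $\sigma\Phi$, and then invoke this equivalence-of-types criterion.

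First I would establish the ``equal'' direction. Suppose $\Phi\gamma = \sigma\Phi$ for some $\gamma \in \Aut(K/\QQ)$. Given $(A,\cC) \in \Isom{K}{\Phi}$ with CM embedding $\iota\colon K \hookrightarrow \End(A)\otimes\QQ$, precomposing $\iota$ with the automorphism $\gamma$ yields a new CM structure $\iota' = \iota\circ\gamma$ on the \emph{same} abelian variety $(A,\cC)$, and its CM-type is $\Phi\gamma = \sigma\Phi$ (this is exactly the computation of how the type transforms under a change of the embedding). Hence $(A,\cC)$, with this relabelled CM structure, represents a class in $\Isom{K}{\sigma\Phi}$. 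Since $\gamma$ is invertible the argument is symmetric, giving $\Isom{K}{\Phi} = \Isom{K}{\sigma\Phi}$ as sets of $\CC$-isomorphism classes of p.p.a.v.'s. (If one prefers to think of these as \emph{classes of pairs} $(A,\iota)$ rather than bare abelian varieties, the same map $\iota \mapsto \iota\circ\gamma$ gives the bijection; I will state at the outset which convention is in force so that ``equal'' is literally equality.)

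Next the ``disjoint'' direction. Suppose some p.p.a.v.\ $(A,\cC)$ lies in both $\Isom{K}{\Phi}$ and $\Isom{K}{\sigma\Phi}$. Then $A$ carries two embeddings $\iota,\iota'\colon K \hookrightarrow \End(A)\otimes\QQ$ with respective types $\Phi$ and $\sigma\Phi$. Because $A$ is simple with $\End(A)\otimes\QQ \cong K$ a field of degree $2g = [K:\QQ]$, any two such embeddings differ by an automorphism of $K$: $\iota' = \iota\circ\gamma$ for a unique $\gamma \in \Aut(K/\QQ)$ (this uses that $\End(A)\otimes\QQ$ has no proper subfield of degree $2g$, so both images coincide, and then $\gamma := \iota^{-1}\circ\iota'$). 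Comparing types as in the previous paragraph forces $\sigma\Phi = \Phi\gamma$, contradicting the hypothesis of the ``otherwise'' case. Therefore the two sets share no element, i.e.\ they are disjoint.

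The main obstacle, and the step deserving the most care, is the bookkeeping in the ``disjoint'' direction: pinning down that the two embeddings $\iota,\iota'$ of $K$ into the one field $\End(A)\otimes\QQ$ must have the same image and hence differ by an element of $\Aut(K/\QQ)$, and then getting the variance of the CM-type under $\iota \mapsto \iota\circ\gamma$ exactly right (in particular whether the type twists by $\gamma$ or $\gamma^{-1}$, and on which side). This is where a sign/side error would invalidate the dichotomy, so I would fix conventions for how $\Gal(\overline{\QQ}/\QQ)$ acts on CM-types (consistent with the convention already used in the excerpt, where $A^\sigma$ has type $\sigma\Phi$) and verify the compatibility explicitly before combining the two directions.
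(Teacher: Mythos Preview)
Your argument is correct and is precisely the standard content behind the paper's proof, which simply cites Streng's thesis for both directions. You have unpacked those citations: the ``equal'' case follows by twisting the CM embedding $\iota \mapsto \iota\circ\gamma$, and the ``disjoint'' case follows because any two embeddings of $K$ into $\End(A)\otimes\QQ \cong K$ differ by an automorphism of $K$, forcing $\sigma\Phi = \Phi\gamma$.
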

\begin{proof}
	For the first statement see \cite[Pg 22]{Str10}. The second statement follows from \cite[Chapter I, Lemma 5.6]{Str10}. 
\end{proof}

For a sextic CM-field $K$ containing the cube roots of unity, define:
\[\Pic{} := \{\text{Picard curves } C \text{ over } \CC \text{ } |
\text{ } \Jac(C) \in \Isom{K}{\Phi}\}/\text{isomorphism over } \CC,\]
and
\[\Picr{} := \{\text{Picard curves } C \text{ over } \FF_p \mid \Jac(C) \in \Isomr{K}{\Phi}\}/\text{isomorphism over } \overline{\FF}_p.\]
Let $p > 3$ be a rational prime that splits completely in $K$ and splits
completely into principal ideals in $K^*$.

\begin{prop}
The reduction of the polynomials $H_i^\Phi$ modulo a prime satisfying the above conditions gives
$H_i^\Phi \mod p \equiv \prod(X-j_i(C))$,
where the product is over all $C$ such that $C$ is in $\Picr{\sigma}$ for some $\sigma \in \Gal(\overline{\QQ}/\QQ)$.
\label{thm:class-polys-reduce}
\end{prop}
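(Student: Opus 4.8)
The plan is to relate the roots of $H_i^\Phi \bmod p$ to the invariants of the curves in $\widetilde{\mathcal{C}^{\sigma\Phi}}$ by carefully tracking how reduction interacts with the product defining $H_i^\Phi$. First I would observe that, by definition, $H_i^\Phi = \prod_\sigma \prod_{C \in \mathcal{C}^{\sigma\Phi}} (X - j_i(C))$, where the outer product runs over a finite set of representatives $\sigma \in \Gal(\overline{\QQ}/\QQ)$ for the distinct types $\sigma\Phi$ (finitely many by Proposition~\ref{thm:inequivalent-types} together with the finiteness statement from \cite[Chapter 3, Corollary 2.7]{Lan83}). By the preceding proposition $H_i^\Phi \in \QQ[X]$, and by Proposition~\ref{thm:bad-red-cond} every $j_i(C)$ occurring is an algebraic integer that is a $\fP$-adic unit or has nonnegative valuation at every prime above $p$ — in particular $H_i^\Phi$ has $p$-integral coefficients, so the reduction $H_i^\Phi \bmod p$ makes sense and equals $\prod_\sigma \prod_{C} (X - \widetilde{j_i(C)})$, where the tilde denotes reduction modulo a chosen prime $\fP$ above $p$ in a suitable number field of definition.

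The key step is then to identify, for each fixed $\sigma$, the multiset $\{\widetilde{j_i(C)} : C \in \mathcal{C}^{\sigma\Phi}\}$ with $\{j_i(\bar C) : \bar C \in \widetilde{\mathcal{C}^{\sigma\Phi}}\}$. For a Picard curve $C$ with $\Jac(C) \in \text{CM}_{K,\sigma\Phi}$, Proposition~\ref{thm:bad-red-cond} gives potential good reduction at $\fP$; after passing to the Hilbert class field $H$ of $K^*$ (a field of definition with everywhere good reduction for the p.p.a.v., by \cite[Proposition 2.1]{Gor97}) and using Lemma~\ref{thm:pic-model-hcf}, $C$ itself has a model over $H$ with good reduction, and the reduction $\bar C$ is a Picard curve whose Jacobian is $(\widetilde{\Jac(C)}, \widetilde{\mathcal{C}})$; since $j_i$ is a rational function of the normal-form coefficients with $p$-integral value, $j_i(\bar C) = \widetilde{j_i(C)}$. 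This gives a surjection from $\mathcal{C}^{\sigma\Phi}$ onto $\widetilde{\mathcal{C}^{\sigma\Phi}}$ compatible with $j_i$. For injectivity — i.e.\ that distinct $\overline{\FF}_p$-isomorphism classes are hit by distinct $\CC$-isomorphism classes, so that no collapsing occurs in the multiset — I would invoke Proposition~\ref{thm:isomorphism-lifts}: two curves $C, C'$ in $\mathcal{C}^{\sigma\Phi}$ whose reductions are isomorphic over $\overline{\FF}_p$ have Jacobians that are simple of the same type $\sigma\Phi$ with good reduction and with reductions simple with endomorphism ring $\cO_K$ (the latter because $p$ splits completely in $K$, so the reduction stays simple by \cite[Chapter 3, Theorem 2]{Shi98} and is ordinary with $\End \cong \cO_K$), hence the isomorphism of reductions lifts, $\Jac(C) \cong \Jac(C')$ over $\overline{\QQ}$, and by Torelli $C \cong C'$ over $\overline{\QQ}$, and then already over $\CC$.

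Finally I would assemble: summing over the finitely many $\sigma$, $H_i^\Phi \bmod p = \prod_\sigma \prod_{\bar C \in \widetilde{\mathcal{C}^{\sigma\Phi}}} (X - j_i(\bar C)) = \prod (X - j_i(\bar C))$ over all $\bar C \in \widetilde{\mathcal{C}^{\sigma\Phi}}$ for some $\sigma$, which is the asserted formula. I expect the main obstacle to be the bookkeeping in the surjectivity/injectivity identification of the two multisets: one must ensure that every member of $\widetilde{\mathcal{C}^{\sigma\Phi}}$ (defined via reduction of the \emph{polarized abelian variety}) genuinely arises as the reduction of a Picard curve over $\CC$ in $\mathcal{C}^{\sigma\Phi}$ — this needs Lemma~\ref{thm:pic-model-hcf} applied over the residue field (which contains the cube roots of unity since $p$ splits completely in $\QQ(\zeta_3) \subset K$) to produce a Picard curve downstairs, together with the fact that potential good reduction plus $p$-integrality of the $j_i$ forces the reduction to again be a Picard curve rather than degenerating — and one must check that multiplicities match, i.e.\ that the reduction map on isomorphism classes is a bijection and not merely a surjection, which is exactly where Proposition~\ref{thm:isomorphism-lifts} is used.
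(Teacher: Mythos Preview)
Your proposal is correct and follows essentially the same route as the paper: reduce each $C$ via \cite{Gor97}, Lemma~\ref{thm:pic-model-hcf}, and Proposition~\ref{thm:bad-red-cond}, then use Proposition~\ref{thm:isomorphism-lifts} (together with simplicity of the reduction from \cite[Chapter 3, Theorem 2]{Shi98}) and Torelli to show the reduction map on isomorphism classes is a bijection. Two small points the paper makes explicit that you should add: first, the hypotheses on $p$ are stated only for the reflex field $K^*$ of $(K,\Phi)$, so to run the argument for every $\sigma\Phi$ you need that the reflex field $K^*_\sigma$ of $(K,\sigma\Phi)$ is $\QQ$-isomorphic to $K^*$, whence $p$ splits completely into principal ideals there as well; second, your injectivity argument via Proposition~\ref{thm:isomorphism-lifts} only applies to two varieties of the \emph{same} type, so it shows each $\mathcal{C}^{\sigma\Phi}\to\widetilde{\mathcal{C}^{\sigma\Phi}}$ is a bijection but does not by itself rule out collisions between $\widetilde{\mathcal{C}^{\sigma\Phi}}$ and $\widetilde{\mathcal{C}^{\sigma'\Phi}}$ for inequivalent types --- the paper handles this by observing (via Propositions~\ref{thm:has-cm} and~\ref{thm:isomorphism-lifts}) that $\Isomr{K}{\Phi}$ and $\Isomr{K}{\sigma\Phi}$ coincide if and only if $\Isom{K}{\Phi}$ and $\Isom{K}{\sigma\Phi}$ do.
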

\begin{proof}
  As $p$ splits completely into principal ideals in $K^*$, the reflex
  field for $(K, \Phi)$, it splits completely in $H$. Let $\fP$ be a
  prime of $H$ above $p$. By \cite [Proposition 2.1]{Gor97}, $\Jac(C)$
  is defined over $H$ for any curve $C$ in $\Pic{}$. Then $C$ itself
  also has a model over $H$ by
  Proposition~\ref{thm:pic-model-hcf}. $C$ has potential good
  reduction by Proposition~\ref{thm:bad-red-cond}, so let $L$ be a
  finite extension over which $C$ obtains good reduction. Furthermore,
  let $\fP_L$ be a prime above $\fP$. Thus, the reduction $C_{\fP_L}$
  of $C$ modulo $\fP_L$ will be defined over possibly a finite
  extension of $\FF_p$. However, as the invariants of $C$ belong to
  $H$, the invariants of $C_{\fP_L}$ belong to $\FF_p$ so $C_{\fP_L}$
  has a model over $\FF_p$. Thus, we get a map from $\Pic{}$ to
  $\Picr{}$. For any $\sigma \in \Gal(\overline{\QQ}/\QQ)$, let
  $K_\sigma^*$ be the reflex field for the type $(K,\sigma\Phi)$. One
  can check that the reflex fields $K^*$ and $K_\sigma^*$ are
  isomorphic over $\QQ$ . Therefore, $p$ splits completely into
  principal ideals in the reflex field of $K_\sigma^*$, so we also get
  a map from $\Pic{\sigma}$ to $\Picr{\sigma}$ induced by reduction
  modulo $\fP_L$. It remains to show that the reduction map induces a
  bijection. Taking Jacobians of elements in $\Pic{}$ and $\Picr{}$
  gives bijective maps into $\Isom{K}{\Phi}$ and $\Isomr{K}{\Phi}$
  respectively.


%

  The map $\Isom{K}{\Phi}$ to $\Isomr{K}{\Phi}$ induced by reduction
  modulo $\fP$ is injective by Proposition~\ref{thm:inj-isom-classes}. By definition, the map from
  $\Isom{K}{\Phi}$ to $\Isomr{K}{\Phi}$ is surjective, so it follows
  that $\Pic{}$ is in bijection with the set $\Picr{}$ under the
  reduction map. The sets $\Isom{K}{\Phi}$ and $\Isom{K}{\sigma\Phi}$
  are either equal or distinct by
  Proposition~\ref{thm:inequivalent-types}. The elements in
  $\Isomr{K}{\Phi}$ are simple with CM by $\cO_K$ by Proposition~\ref{thm:has-cm}. Thus, the sets $\Isomr{K}{\Phi}$ and
  $\Isomr{K}{\sigma\Phi}$ are equal if and only if $\Isom{K}{\Phi}$
  and $\Isom{K}{\sigma\Phi}$ are equal by
  Proposition~\ref{thm:isomorphism-lifts}. Therefore, bijectivity of
  the map from $\Pic{}$ to $\Picr{}$ suffices to prove the
  proposition.
\end{proof}

\section{Computing $H_i^\Phi$ modulo $p$}
\label{sec:isomorphism-classes}

Let $(K,\Phi)$ be a primitive CM-type. Denote by $(K^*, \Phi^*)$ the
reflex of $(K, \Phi)$. Let $H$ be the Hilbert class field of $K^*$ and
$M$ the normal closure of the compositum of $K$ and $K^*$. Let $L$ be the Galois closure of the compositum of $H$ and $M$ over $\QQ$. Take $p$ to
be a rational prime which splits completely into principal ideals in
$K^*$ and splits completely in $K$. Denote by $\fP$ a prime of $H$
above $p$, $\fP_L$ a prime of $L$ above $\fP$ and $\fP_M$ a prime of
$M$ below $\fP_L$. Denote by $\Phi_M^{-1}$ the set of elements
$\psi_i$ of $\Gal(M/\QQ)$ such that $(\psi_i^{-1})_{|K} \in \Phi$.

\subsection{An equivalent definition of $\Isomr{K}{\Phi}$}

In this subsection, we give an equivalent definition of
$\Isomr{K}{\Phi}$ in terms of a condition on the Frobenius of the
abelian varieties in $\Isomr{K}{\Phi}$. This new definition is more
suitable for computations. In particular, we will use it in
computing the set $\Picr{}$ which occurs in the description of the
class polynomials $H_i^\Phi$ modulo $p$ in Theorem~\ref{thm:class-polys-reduce}.
For a CM-field $K$ with $[K:\QQ] = 2g$, recall the definitions of $\Isom{K}{\Phi}$ and $\Isomr{K}{\Phi}$ from Section~\ref{sec:red-of-class}.

%

We will now define a set $\altdef{K}{\Phi}$ which we will show is
equal to the set $\Isomr{K}{\Phi}$. The main tool that allows us to
give this equivalent description will be the Shimura-Taniyama
Congruence relation, specifically the statement in
Proposition~\ref{thm:frob-cond}, which relates the CM-type of an
abelian variety defined over a number field with CM to the ideal
generated by Frobenius of the reduction of the abelian variety modulo
$\fP$. In genus 2, this idea was used in \cite{LR13} to describe the
set we refer to as $\Isomr{K}{\Phi}$.

With notation as above, denote by $\altdef{K}{\Phi}$ the set of all
$\overline{\FF}_p$-isomorphism classes of ordinary, simple,
principally polarized abelian varieties $(A, \cC)$ of dimension $g$ defined over $\FF_p$
with CM by $\cO_K$ satisfying the following
condition: For $(A, \cC)$ a representative of an $\overline{\FF}_p$
class as above, there exists an embedding $\iota$ of $K
\hookrightarrow \End(A) \otimes \QQ$ such that, under this embedding,
the element $\pi$ for which $\iota(\pi)$ is the Frobenius endomorphism
on $A$ satisfies
\begin{equation}\label{eqn:frob-cond}
 \pi\cO_M = \prod_{\phi \in \Phi_M^{-1}}\fP_M^{\phi}. 
\end{equation}

Recall, in the beginning of the section, we fixed a prime $\fP_L$ of $L$ above the prime $\fP$ of $H$ and define $\fP_M = \fP_L \cap M$. One can easily check that $\Isomr{K}{\Phi}$ does not depend on the choice of $\fP_L$ above $\fP$. We now wish to show that the sets $\altdef{K}{\Phi}$ and $\Isomr{K}{\Phi}$ are equal. First we show

\begin{prop} Every element in $\Isomr{K}{\Phi}$ is ordinary, geometrically simple with endomorphism ring isomorphic to $\cO_K$.
\label{thm:has-cm}
\end{prop}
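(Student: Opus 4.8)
The plan is to establish three separate assertions about an arbitrary element $(A_\fP, \cC_\fP)$ of $\Isomr{K}{\Phi}$: that it is ordinary, that it is geometrically simple, and that its geometric endomorphism ring is isomorphic to $\cO_K$. Recall such a variety arises as the reduction modulo a prime $\fP$ of $H$ of a p.p.a.v.\ $(A,\cC) \in \Isom{K}{\Phi}$, where $H$ is the Hilbert class field of $K^*$, and that $p$ splits completely in $K$ and splits completely into principal ideals in $K^*$.

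First I would prove ordinarity. Since $(A,\cC)$ has CM by $\cO_K$ of type $\Phi$, and $p$ splits completely into principal ideals in the reflex field $K^*$, one can invoke the Shimura--Taniyama relation in the form of Proposition~\ref{thm:frob-cond}: the Frobenius element $\pi$ satisfies $\pi\cO_M = \prod_{\phi \in \Phi_M^{-1}}\fP_M^{\phi}$. Because $p$ splits completely in $K$, every prime $v \mid p$ of $K$ has residue degree $1$, and the set $\{v \mid p\}$ is acted on simply by the decomposition data; the condition that $\Phi \sqcup \rho\Phi$ is the full set of embeddings forces $\pi$ and its complex conjugate $\bar\pi = q/\pi$ to be coprime at every prime above $p$. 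Equivalently, via Proposition~\ref{thm:Frob-determines-type}, $v(\pi) \in \{0,1\}$ for each $v \mid p$ with exactly the primes in the type being ``hit'', so $\pi$ is not divisible by any prime dividing $\bar\pi$; this is precisely the statement that the Newton polygon of the Frobenius is ordinary (the $p$-rank equals $g$). I would phrase this as: $\tilde A$ is ordinary because the ideal $(\pi)$ and the ideal $(\bar\pi)$ are coprime, which follows from primitivity of $\Phi$ together with complete splitting of $p$ in $K$.

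Next, geometric simplicity and the endomorphism ring. Since $\tilde A = A_\fP$ has CM by $\cO_K$ already over $\FF_p$ (the reduction of $\iota$ gives an embedding $\cO_K \hookrightarrow \End(A_\fP)$, and $\dim A = g = \tfrac12[K:\QQ]$), $\End(A_\fP)\otimes\QQ$ contains the field $K$ of degree $2g$ and hence, being at most $2g$-dimensional for an abelian variety of dimension $g$ admitting CM, equals $K$; in particular $A_\fP$ is simple over $\FF_p$ and $\End(A_\fP)$ is an order in $\cO_K$ containing $\cO_K$, hence equals $\cO_K$. To upgrade to the geometric statement I would use that $A_\fP$ is ordinary: by Waterhouse \cite[Theorem 7.2]{Wat69} (already cited in the proof of Proposition~\ref{thm:bad-red-cond}), the endomorphism algebra of an ordinary abelian variety over a finite field is unchanged under base field extension, so $\End(A_{\fP,\overline{\FF}_p})\otimes\QQ = K$ remains a field and $A_\fP$ stays simple over $\overline{\FF}_p$. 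Since the geometric endomorphism ring is an order in $K$ containing $\cO_K$, it is $\cO_K$.

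The main obstacle is the ordinarity step: one must be careful that ``$p$ splits completely in $K$'' plus ``$\Phi$ primitive'' genuinely forces coprimality of $(\pi)$ and $(\bar\pi)$. I would handle this by working prime-by-prime over $p$ using Proposition~\ref{thm:Frob-determines-type}: for each $v \mid p$ in $K$ there is a unique embedding into $K_v$, and $v(\pi)=1$ iff that embedding lies in the type $\Gamma$; since $\Gamma \sqcup \rho\Gamma$ exhausts all embeddings, exactly one of $v$, $\rho v$ has $\pi$-valuation $1$ and the other $0$, giving $\gcd((\pi),(\bar\pi)) = (1)$ above $p$, hence $p$-rank $g$. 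Everything else is a routine dimension count for CM abelian varieties plus the citation to \cite{Wat69}. I would close by noting these three properties are exactly what is asserted, completing the proof.
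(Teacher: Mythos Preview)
Your ordinarity argument is correct and is more self-contained than the paper's, which simply cites \cite[Theorem~1.2]{Sug14}; your direct valuation analysis of $\pi$ via Shimura--Taniyama is essentially what underlies that result in this situation. (A small remark: primitivity of $\Phi$ is not actually needed for ordinarity---the CM-type condition $\Phi\sqcup\rho\Phi=\Hom(K,\Omega)$ together with complete splitting of $p$ already gives $v(\pi)+\bar v(\pi)=1$ with $v\neq\bar v$ for each $v\mid p$.) Your final step, invoking \cite[Theorem~7.2]{Wat69} to pass from $\FF_p$ to $\overline{\FF}_p$, matches the paper exactly.

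The gap is in the simplicity step. The assertion that $\End(A_\fP)\otimes\QQ$ is ``at most $2g$-dimensional for an abelian variety of dimension $g$ admitting CM'' is false over a finite field: if $E/\FF_p$ is an ordinary elliptic curve with $\End^0(E)=K_0$ imaginary quadratic, then $A=E^g$ has $\End^0(A)=M_g(K_0)$ of dimension $2g^2$, and any CM field $K$ of degree $2g$ containing $K_0$ embeds in it. What is true is only that $K$ is its own \emph{centralizer} in $\End^0(A_\fP)$ (since $V_\ell A_\fP$ is free of rank one over $K\otimes\QQ_\ell$), which does not yield $\End^0(A_\fP)=K$. The paper avoids this by citing \cite[Chapter~3, Theorem~2]{Shi98}, which uses the hypothesis that $p$ splits completely in $K$ to conclude directly that $A_\fP$ is simple with $\End(A_\fP)=\cO_K$. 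Your route can be repaired, and here primitivity of $\Phi$ really is needed: having computed $\pi\cO_K$ from Shimura--Taniyama, observe that if $\QQ(\pi)=K'\subsetneq K$ were a proper CM subfield, then the primes of $K$ dividing $(\pi)$ would form a union of full fibers over primes of $K'$, so $\Phi$ would be induced from a CM-type on $K'$, contradicting primitivity. Hence $\QQ(\pi)=K$, the characteristic polynomial of Frobenius is irreducible, $A_\fP$ is simple, and (by ordinarity) $\End^0(A_\fP)=\QQ(\pi)=K$.
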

\begin{proof}
  Let $(A, \cC)$ be a representative of a class in $\Isom{K}{\Phi}$
  such that it has good reduction modulo $\fP$ as above. Let $A_{\fP}$
  be the reduction of $A$ modulo $\fP$.  The reduction map gives an
  inclusion $\End(A) \hookrightarrow \End(A_\fP)$ \cite[Theorem
  3.2]{Lan83}, thus, $\cO_K$ embeds into $\End(A_\fP)$. By
  \cite[Chapter 3, Theorem 2]{Shi98}, the abelian variety $A_{\fP}$ is
  simple and $\End(A_{\fP}) = \cO_K$. Also, $A_\fP$ is ordinary
  by~\cite[Theorem 1.2]{Sug14}. Thus, $\End(A_{\fP}) \otimes \QQ$ is
  unchanged after base extension by \cite[Theorem 7.2]{Wat69}. Hence
  $A_\fP$ is geometrically simple as the endomorphism ring tensored
  with $\QQ$ is a field.
\end{proof}

The following two results are a generalization to arbitrary dimension of the dimension 2 case treated in \cite[Theorem 2]{EL09}.

\begin{prop} The reduction map $\Isom{K}{\Phi} \rightarrow \Isomr{K}{\Phi}$ is injective.
\label{thm:inj-isom-classes}
\end{prop}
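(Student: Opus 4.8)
The plan is to deduce injectivity of the reduction map $\Isom{K}{\Phi} \to \Isomr{K}{\Phi}$ directly from Proposition~\ref{thm:isomorphism-lifts}, which says that an isomorphism between the reductions of two simple abelian varieties of type $(K,\Phi)$ over a number field, whose reductions are simple with endomorphism ring $\cO_K$, lifts to an isomorphism over $\overline{k}$. So first I would take two classes in $\Isom{K}{\Phi}$, represented by principally polarized abelian varieties $(A, \cC_A)$ and $(B, \cC_B)$, each with a model over the Hilbert class field $H$ of $K^*$ having good reduction at the fixed prime $\fP$ of $H$ (this is available by \cite[Proposition 2.1]{Gor97}, as already used in the excerpt). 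Suppose their reductions $(\tilde A, \tilde\cC_A)$ and $(\tilde B, \tilde\cC_B)$ are isomorphic over $\overline{\FF}_p$ as principally polarized abelian varieties; the goal is to conclude $(A,\cC_A) \cong (B,\cC_B)$ over $\CC$.

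The key steps, in order: (1) Record that by Proposition~\ref{thm:has-cm} the reductions $\tilde A$ and $\tilde B$ are geometrically simple with endomorphism ring isomorphic to $\cO_K$, so the hypotheses of Proposition~\ref{thm:isomorphism-lifts} are met (with $k = H$, after possibly enlarging to a field over which everything including the chosen isomorphism $\gamma$ is defined). (2) Apply Proposition~\ref{thm:isomorphism-lifts} to get an isomorphism $A \cong B$ over $\overline{H} = \overline{\QQ}$, hence over $\CC$. (3) Upgrade this to an isomorphism of \emph{polarized} abelian varieties: since $A$ and $B$ are simple of dimension $g$ with $\End(A)\otimes\QQ = \End(B)\otimes\QQ = K$, the principal polarizations on a fixed abelian variety with CM by $\cO_K$ form a single orbit under the relevant unit/ideal-class action (cf.\ \cite[Chapter I]{Str10}); more directly, one checks that the lifted isomorphism respects the polarizations because polarizations are detected on the reduction via \cite[Chapter II, Proposition 6.7]{Mil06} (already cited in the excerpt) and the reduction map on Néron--Severi / on Hom groups is injective. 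This gives that the original classes in $\Isom{K}{\Phi}$ coincide.

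The main obstacle I anticipate is step (3): Proposition~\ref{thm:isomorphism-lifts} as stated only produces an isomorphism of abelian varieties, not of polarized abelian varieties, whereas $\Isom{K}{\Phi}$ is a set of isomorphism classes of \emph{principally polarized} abelian varieties. One must argue that a lift of the polarization-preserving isomorphism $\gamma$ can itself be chosen to preserve polarizations — equivalently, that the ambiguity in the lift (composition with automorphisms of $A$, i.e.\ with elements of $\cO_K^\times$) can absorb the discrepancy between $\phi_B^{-1}\circ (\text{lift})^\vee \circ \phi_A$ and $\id$, using that this discrepancy reduces to $\id$ mod $\fP$ and that the reduction map $\cO_K^\times \to \End(\tilde A)^\times$ is injective (as in the proof of Proposition~\ref{thm:isomorphism-lifts} itself). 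I would spell this out by transporting $\tilde\cC_B$ back along $\gamma$ to a polarization on $\tilde A$ equal to $\tilde\cC_A$, lifting the identity of polarizations via injectivity of reduction on the group of polarizations, and then composing.

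A cleaner alternative, which I would actually prefer to present, is to bypass polarizations in the lifting step entirely: use Proposition~\ref{thm:isomorphism-lifts} to get $A \cong B$ over $\CC$ as abelian varieties, note both carry CM by $\cO_K$ of type $\Phi$, and then invoke the classification \cite[Chapter 3]{Lan83} under which $\CC$-isomorphism classes of principally polarized abelian varieties with CM by $\cO_K$ of type $\Phi$ are in bijection with a quotient of the ideal class group of $K$; since the reduction-compatible identification of $A$ and $B$ forces them into the same class there (the polarization being determined up to the finite ambiguity already killed by the $\overline{\FF}_p$-isomorphism of polarized reductions together with injectivity of reduction on $\Hom$), we get $(A,\cC_A) \cong (B,\cC_B)$. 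Either way the entire content is: lift the isomorphism by Proposition~\ref{thm:isomorphism-lifts}, then check it is compatible with polarizations using injectivity of reduction on homomorphism groups \cite[Theorem 3.2]{Lan83} and \cite[Chapter II, Proposition 6.7]{Mil06}.
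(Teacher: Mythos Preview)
Your proposal is correct and follows essentially the same approach as the paper: invoke Proposition~\ref{thm:has-cm} to verify the hypotheses on the reductions and then apply Proposition~\ref{thm:isomorphism-lifts}. The paper's proof consists of exactly those two sentences and does not spell out the polarization compatibility you worry about in step~(3); your argument via injectivity of the reduction map on $\Hom(A,A^\vee)$ is a valid way to fill in this detail, which the paper leaves implicit.
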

\begin{proof}
Every element in $\Isomr{K}{\Phi}$ is simple with CM by $\cO_K$ by Proposition~\ref{thm:has-cm}. Thus, the proposition follows from applying Proposition~\ref{thm:isomorphism-lifts}.
 \end{proof}
\begin{thm} With notation as above, the set $\Isomr{K}{\Phi}$ is equal to the set $\altdef{K}{\Phi}$.
\label{thm:bijection-of-isom-classes}
\end{thm}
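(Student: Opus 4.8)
The plan is to prove $\Isomr{K}{\Phi} = \altdef{K}{\Phi}$ by showing containment in both directions, using the congruence relation (Proposition~\ref{thm:frob-cond}) to pass between the CM-type and the Frobenius ideal. First I would show $\Isomr{K}{\Phi} \subseteq \altdef{K}{\Phi}$. Take a class in $\Isomr{K}{\Phi}$, represented by the reduction $(A_\fP, \cC_\fP)$ modulo $\fP$ of some $(A, \cC) \in \Isom{K}{\Phi}$ defined over $H$. By Proposition~\ref{thm:has-cm}, $(A_\fP, \cC_\fP)$ is ordinary, geometrically simple with endomorphism ring $\cO_K$, so it has the structural features required of an element of $\altdef{K}{\Phi}$; it remains only to check the Frobenius condition~\eqref{eqn:frob-cond}. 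For this I would invoke Proposition~\ref{thm:frob-cond} directly: since $p$ splits completely in $K$ and completely into principal ideals in $K^*$, the element $\pi \in \cO_K$ with $\tilde\iota(\pi)$ equal to the $\mathrm{N}_{k/\QQ}(\fP)$-power Frobenius satisfies $\pi\cO_M = \prod_{\gamma \in \Phi_M^{-1}} \fP_M^\gamma$. Because $p$ splits completely in $K$, the reduction is over $\FF_p$ and $\mathrm{N}_{k/\QQ}(\fP) = p$, so $\tilde\iota(\pi)$ is the $\FF_p$-Frobenius and the condition~\eqref{eqn:frob-cond} holds verbatim.

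Next I would show the reverse containment $\altdef{K}{\Phi} \subseteq \Isomr{K}{\Phi}$. Let $(A, \cC)/\FF_p$ be an ordinary, simple p.p.a.v.\ with CM by $\cO_K$ and an embedding $\iota$ under which the Frobenius $\pi$ satisfies~\eqref{eqn:frob-cond}. By the theory of CM lifting for ordinary abelian varieties (Deligne's equivalence / the Serre--Tate canonical lift), $(A, \cC)$ lifts to a p.p.a.v.\ $(\tilde A, \tilde\cC)$ over the Witt vectors, hence over $\ZZ_p$ after base change, with CM by $\cO_K$; let $\Gamma$ be its CM-type. Here I would use Proposition~\ref{thm:Frob-determines-type}: since $p$ splits completely in $K$, the CM-type of the lift is determined by the Frobenius ideal, namely $\Gamma = \{\phi : \phi \in H_v, v \mid \pi\cO_K\}$. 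I then need to translate~\eqref{eqn:frob-cond}, a factorization in $\cO_M$, into the statement that $\pi\cO_K$ determines precisely the type $\Phi$. Intersecting~\eqref{eqn:frob-cond} with $\cO_K$ and comparing with the analogous factorization coming from $\Phi_M^{-1}$ (the argument of Proposition~\ref{thm:frob-cond} run in reverse), one sees that the primes of $K$ dividing $\pi$ are exactly those picked out by $\Phi$, so $\Gamma = \Phi$ as CM-types on $K$ (possibly after adjusting the embedding $\iota$, which only permutes the factorization). Thus $(\tilde A, \tilde\cC) \in \Isom{K}{\Phi}$, and its reduction modulo a suitable prime recovers $(A, \cC)$; to make the reduction land back on our given $(A, \cC)$ over $\FF_p$, I would use that the invariants lie in $H$ and that distinct lifts with the same type reducing to the same variety are already covered by the injectivity of reduction (Proposition~\ref{thm:inj-isom-classes}). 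Hence $(A, \cC) \in \Isomr{K}{\Phi}$.

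The main obstacle I anticipate is the careful bookkeeping in the reverse direction: going from the membership condition~\eqref{eqn:frob-cond}, phrased over the large field $M$ via $\Phi_M^{-1}$, back to a clean statement that the lift's CM-type equals $\Phi$ on $K$. One has to reconcile the place-theoretic description in Proposition~\ref{thm:Frob-determines-type} (which works over $K_v \subseteq \overline{\QQ}_p$) with the ideal-theoretic description over $\cO_M$, keeping track of how the chosen primes $\fP_L, \fP_M, \fP$ and the algebraic closures $\overline{\QQ}, \overline{\QQ}_p$ are compatibly identified, and of the harmless ambiguity in $\iota$ up to $\Aut(K/\QQ)$. I would handle this by fixing once and for all an embedding $\overline{\QQ} \hookrightarrow \overline{\QQ}_p$ inducing the valuations attached to $\fP_L$, so that $\Phi_M^{-1}$ (an abstract Galois description) and $H_v$ (a $p$-adic description) refer to the same set of embeddings, reducing the equivalence of the two Frobenius conditions to unwinding definitions. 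The ordinarity hypothesis is essential here since it is what makes the CM lift exist and unique (canonical), so no extra lifting obstruction arises; the polarization lifts along with it by the usual argument that a polarization deforms uniquely in the CM/ordinary setting.
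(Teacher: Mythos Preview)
Your forward inclusion $\Isomr{K}{\Phi}\subseteq\altdef{K}{\Phi}$ is exactly the paper's argument. The reverse direction, however, is handled differently, and your version has a genuine gap.

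You try to prove $\altdef{K}{\Phi}\subseteq\Isomr{K}{\Phi}$ directly: take $(A_0,\cC_0)\in\altdef{K}{\Phi}$, form its Serre--Tate canonical lift over $\ZZ_p$, pass to $\CC$, identify the CM-type as $\Phi$, and conclude that the resulting class lies in $\Isom{K}{\Phi}$. Up to here this is fine, and your idea of fixing an embedding $\overline{\QQ}\hookrightarrow\overline{\QQ}_p$ compatible with $\fP_L$ to pin down the type is reasonable (the paper instead allows the type to be $\sigma\Phi$ and then adjusts the embedding $\rho:\QQ_p\hookrightarrow\CC$ afterwards). The problem is the next step: you need to show that \emph{this} element of $\Isom{K}{\Phi}$, once given its $H$-model and reduced modulo the specific prime $\fP$, returns the original $(A_0,\cC_0)$. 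Your lift lives over $\ZZ_p$, not over $\cO_{H,\fP}$, and the passage ``$\ZZ_p$-scheme $\leadsto$ $\CC$-point via $\rho$ $\leadsto$ $H$-model $\leadsto$ reduce at $\fP$'' does not obviously close up to the identity. Your appeal to Proposition~\ref{thm:inj-isom-classes} is in the wrong direction: injectivity of reduction says that two classes in $\Isom{K}{\Phi}$ with the same reduction coincide, not that the reduction of your constructed lift is the class you started with. One \emph{can} close this loop (e.g.\ by arguing that the $H$-model base-changed to $H_\fP\cong\QQ_p$ is again a CM lift of its own reduction, hence by Serre--Tate uniqueness coincides with the canonical lift, and then matching special fibers), but you have not supplied this argument.

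The paper sidesteps this entirely by a counting argument. Having shown $\Isomr{K}{\Phi}\subseteq\altdef{K}{\Phi}$ and knowing from Proposition~\ref{thm:inj-isom-classes} that $|\Isom{K}{\Phi}|\le|\Isomr{K}{\Phi}|$, it suffices to prove $|\altdef{K}{\Phi}|\le|\Isom{K}{\Phi}|$. For this the paper only needs the map ``canonical lift, then base change along a single fixed $\rho:\QQ_p\hookrightarrow\CC$'' to be \emph{injective} from $\altdef{K}{\Phi}$ into $\Isom{K}{\Phi}$; there is no need to check that it is a section of reduction at $\fP$. This is strictly less to verify than what your direct containment requires, and it is why the paper's route is cleaner here.
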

\begin{proof}
  We first show that $\Isomr{K}{\Phi} \subset \altdef{K}{\Phi}$. Let
  $(A, \cC)$ be a representative of a class in $\Isomr{K}{\Phi}$. By
  Proposition~\ref{thm:has-cm}, $A$ is ordinary and geometrically simple with $\End(A) \cong \cO_K$. As we remarked above, $p$ splits completely into principal
  ideals in $K^*$, so  the Frobenius of $A$ satisfies Equation~\ref{eqn:frob-cond} by Proposition~\ref{thm:frob-cond}. Hence $\tilde{A} \in \altdef{K}{\Phi}$.
This shows $\Isomr{K}{\Phi} \subset \altdef{K}{\Phi}$. It remains to show the
reverse inclusion. 

To do this, we will show that the two sets have the
same cardinality. Both sets are finite as there are only finitely many
isomorphism classes of principally polarized abelian varieties defined
over $\FF_p$. We know from the previous proposition that $\Isom{K}{\Phi}
\rightarrow \Isomr{K}{\Phi}$ is an injection. Thus, we have the inequality
of cardinalities: $|\Isom{K}{\Phi}| \le |\Isomr{K}{\Phi}| \le |\altdef{K}{\Phi}|$.

It suffices to show $|\altdef{K}{\Phi}| \le
|\Isom{K}{\Phi}|$. Therefore, we will show that there is an injective
map from $\altdef{K}{\Phi}$ into $\Isom{K}{\Phi}$. We define the map
as follows: Let $(A_0, \mathcal{C}_0)$ be an abelian variety representing
a class in $\altdef{K}{\Phi}$. Since $A_0$ is ordinary, we can
consider its Serre-Tate canonical lift \cite[Pgs 172-173, Theorem
3.3]{Mes72}
to $\ZZ_p$ which we will call $(A, \mathcal{C})$ .

As $(A_0, \mathcal{C}_0) \in \altdef{K}{\Phi}$ we have $\pi\cO_M = \prod_{\phi_\alpha \in \Phi_M^{-1}}(\fP_M)^{\phi_\alpha}$. Let $\{\psi_w\}$ be the set of all embeddings of $M$ into
$\overline{\QQ}_p$ induced by completion at a prime $\fP_w$ for $\fP_w
\mid \pi \cO_M$. By Proposition~\ref{thm:Frob-determines-type}, the
embeddings induced by completion at primes occurring in the
decomposition of the ideal generated by $\pi$ give the CM-type of
$A$. Under some embedding $\rho: \QQ_p \hookrightarrow \CC$, we can
verify that $\rho(A)$ has type $(K, \sigma\Phi)$ for some
$\sigma \in \Gal(M/\QQ)$. By \cite[Theorem 7]{Yal66}, modifying 
$\rho$ by an automorphism of $\CC$, we can arrange that $\rho(A)$ has
CM-type $(K, \Phi)$. As the choice of $\rho$ does not depend on $A$,
this gives us the injection from $\altdef{K}{\Phi}$ to
$\Isom{K}{\Phi}$. Hence $\altdef{K}{\Phi} = \Isomr{K}{\Phi}$.
\end{proof}

\subsection{Correctness proof for the main algorithm}

We must now show that the Chinese Remainder Theorem may be used to reconstruct the class polynomials from sufficiently many of the $H_{i,p}$. This is accomplished by the following whose proof is identical to that of \cite[Theorem 3]{EL09}:

\begin{thm} Let $M$ be the least common multiple of the denominators of the class polynomials and let $N$ be the maximum absolute value of the coefficients of the class polynomials. Let $B = 2NM$. Then if $S$ is a set of primes satisfying the conditions in Theorem~\ref{thm:main-algorithm}, we can use the Chinese remainder theorem on the polynomials $\{H_{i,p}\}_{p \in S}$, $i$ from $1$ to $3$, to reconstruct the polynomials $H_i^\Phi$.
\label{thm:CRT}
\end{thm}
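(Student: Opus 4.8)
The plan is to show that knowing the polynomials $H_{i,p} = H_i^\Phi \bmod p$ for all $p \in S$, together with the bound $B = 2NM$ and the fact that $\prod_{p \in S} p > B$, determines $H_i^\Phi$ uniquely and effectively. The strategy mirrors the standard CRT reconstruction argument used for Hilbert and Igusa class polynomials, e.g.\ \cite[Theorem 3]{EL09}.

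First I would clear denominators: let $M$ be the least common multiple of the denominators of the coefficients of $H_1^\Phi, H_2^\Phi, H_3^\Phi$, so that $M \cdot H_i^\Phi \in \ZZ[X]$ for each $i$. Since each $p \in S$ is coprime to $M$ (indeed $2 \notin S$ and one may assume the primes of $S$ avoid the finitely many primes dividing $M$; alternatively this is forced by the splitting conditions once $B$ is chosen large enough, but the cleanest route is simply to note $M \cdot H_{i,p} = (M \bmod p)\cdot H_i^\Phi \bmod p$ and that $M$ is invertible mod $p$), one can recover $(M \cdot H_i^\Phi) \bmod p$ from $H_{i,p}$ by multiplying by $M \bmod p$. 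Thus for each coefficient $c$ of $M \cdot H_i^\Phi$ — an ordinary integer — we know $c \bmod p$ for every $p \in S$.

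Next I would apply the Chinese Remainder Theorem coefficient-by-coefficient: the residues $\{c \bmod p\}_{p \in S}$ determine $c$ modulo $\prod_{p \in S} p$. Since $|c| \le N M$ by definition of $N$, we have $|c| \le B/2 < \tfrac12 \prod_{p \in S} p$, so $c$ is the unique integer in the interval $(-\tfrac12\prod_{p\in S}p,\ \tfrac12\prod_{p\in S}p]$ congruent to the CRT solution; this pins down $c$ exactly. Dividing the resulting integer polynomial by $M$ recovers $H_i^\Phi \in \QQ[X]$.

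The only genuine subtlety is the interplay between the set $S$ (whose members must satisfy the splitting conditions of Theorem~\ref{thm:main-algorithm}) and the integer $M$: one must ensure $S$ can be taken both large enough that $\prod_{p \in S} p > B$ and disjoint from the primes dividing $M$. This is not an obstacle, since excluding finitely many primes still leaves infinitely many primes satisfying conditions (a)–(c) (by Chebotarev, a positive density of primes splits completely in the compositum of $K$ and the Hilbert class field of $K^*$), so the product over the remaining admissible primes can be made to exceed $B$. With that observation the argument is complete and, as noted, formally identical to \cite[Theorem 3]{EL09}.
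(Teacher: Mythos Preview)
Your proposal is correct and is exactly the argument the paper has in mind: the paper gives no proof beyond stating that it ``is identical to that of \cite[Theorem 3]{EL09},'' and what you have written is precisely that standard CRT reconstruction. One small remark: your worry about primes of $S$ dividing $M$ is in fact already handled by condition~(b), since Proposition~\ref{thm:bad-red-cond} shows that any $p>3$ splitting completely in $K$ gives $p$-integral invariants, hence $p\nmid M$.
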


\begin{remark} A definition of class polynomials for Picard curves and
	a bound on the primes occurring in the denominators are given in
	\cite[Theorem 1.3]{KLLNOS16}, and the class polynomials we define
	divide them. In genus 2, bounds on the denominators of the Igusa
	class polynomials were obtained in \cite{GL12}.
\end{remark}

\begin{proof}[Proof of Theorem~\ref{thm:main-algorithm}] 
Using Theorem~\ref{thm:class-polys-reduce}, we see that $H_{i,p} := \prod(X - j_i(C))$, where the product runs over representatives for elements in $\Picr{\sigma}$ for all $\sigma \in \Gal(\overline{\QQ}/\QQ)$.
We can enumerate all $\overline{\FF}_p$ isomorphism classes of Picard
curves defined over $\FF_p$ using the invariants discussed in
Remark~\ref{sec:enum-isom-classes}. We can check whether a curve is in
$\Picr{}$ by checking whether $\Jac(C)$ is in $\altdef{K}{\Phi}$ by
Theorem~\ref{thm:bijection-of-isom-classes}. This involves checking
that $\Jac(C)$ has complex multiplication by $\cO_K$ which can be
accomplished using the algorithm of Section~\ref{sec:endo-comp}. We
then perform the CRT step using Theorem~\ref{thm:CRT}.
\end{proof}

\section{Endomorphism ring computation}
\label{sec:endo-comp}

The algorithm of Theorem~\ref{thm:main-algorithm} requires us to
check whether certain genus 3 curves $C$ have complex multiplication
by a sextic CM-field $K$. 
 An algorithm for checking whether the
Jacobian of an ordinary genus 2 curve (i.e.\ a curve whose Jacobian is
ordinary) has complex multiplication by the full ring of integers of a
primitive quartic CM-field $K$ was presented, under certain
restrictions on the field $K$, in \cite{EL09}. Improvements to this
algorithm were presented in \cite{FL08} and \cite{LR13}. We generalize
these methods to the genus 3 case.

\begin{thm}

The following algorithm takes as input a sextic CM-field $K$ and an ordinary genus 3 curve $C$ over a field $\FF_p$ where $p$ splits completely in $K$. The algorithm outputs {\bf true} if $\Jac(C)$ has endomorphism ring the full ring of integers $\cO_K$ and {\bf false} otherwise:
 
\begin{enumerate}
\item Compute a list of all possible characteristic polynomials of Frobenius for ordinary, simple, abelian varieties with complex multiplication by $K$. Output false if the characteristic polynomial of $\Jac(C)$ is not in this list.
\item Compute a basis for $\cO_K$. 
\item For each element $\alpha$ of the basis in the previous step, use Proposition~\ref{thm:is-endo} to determine if it is an endomorphism. If it is not, output false.
\item Output true. 
\end{enumerate}
\end{thm}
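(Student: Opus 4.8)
The plan is to prove correctness of the endomorphism-ring-testing algorithm step by step, matching each numbered step to a precise mathematical fact. The key structural point is that an ordinary abelian variety $\Jac(C)/\FF_p$ with $p$ split completely in $K$ is isogenous to an abelian variety with CM by $\cO_K$ precisely when its Weil polynomial corresponds (via Honda--Tate) to a Weil $p$-number generating $\cO_K$; and once that isogeny condition holds, the actual endomorphism ring is $\cO_K$ exactly when every element of a fixed $\ZZ$-basis of $\cO_K$ acts as a genuine endomorphism. So the algorithm is really: (1) a necessary isogeny-class check via characteristic polynomials, then (3) an upgrade from ``$\cO_K$ acts on the isogeny class'' to ``$\cO_K$ is the full endomorphism ring'' by testing basis elements.

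First I would address Step (1). Since $p$ splits completely in $K$ and $\Jac(C)$ is assumed ordinary, any CM by $\cO_K$ forces the Frobenius $\pi$ to be a Weil $p$-number with $\QQ(\pi) = K$ and $\pi\cO_K$ supported on the places above $p$ in the pattern dictated by a CM-type (cf.\ Proposition~\ref{thm:Frob-determines-type}); conversely, by Honda--Tate, each such $\pi$ (up to conjugacy) gives a unique isogeny class, whose members admit $\cO_K$ acting up to isogeny. Hence one enumerates the finitely many degree-$6$ $q$-Weil polynomials $h(X)$ that are (i) irreducible with splitting field a CM-field isomorphic to $K$, (ii) ordinary (middle coefficient prime to $p$), and (iii) such that a root generates $\cO_K$ as an order — this is a finite, effective list. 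If the characteristic polynomial of Frobenius of $\Jac(C)$ (computed from point counts over $\FF_p, \FF_{p^2}, \FF_{p^3}$) is not on it, then $\Jac(C)$ cannot have CM by $\cO_K$, and returning \textbf{false} is correct. If it is on the list, then $\Jac(C)$ is isogenous to an abelian variety with CM by $\cO_K$, so $K \cong \QQ(\pi) \hookrightarrow \End(\Jac(C))\otimes\QQ$; fixing one such embedding $\iota$, the endomorphism ring $\End(\Jac(C))$ is an order in $K$ containing $\ZZ[\pi, \bar\pi]$, so it is sandwiched between $\ZZ[\pi,\bar\pi]$ and $\cO_K$.

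Next, Steps (2)--(3): compute a $\ZZ$-basis $\alpha_1,\dots,\alpha_6$ of $\cO_K$ (standard, via the minimal polynomial of $\mu$ and a round-$2$/Pohst--Zassenhaus maximal-order computation). Since $\End(\Jac(C))$ is a ring and already contains $\ZZ[\pi,\bar\pi]$, it equals $\cO_K$ if and only if each $\alpha_j$ lies in it, i.e.\ acts as an actual endomorphism of $\Jac(C)$ under $\iota$. That last test is exactly what Proposition~\ref{thm:is-endo} (referenced but not yet stated) provides: an effective procedure deciding whether a given element of $K$ corresponds to a genuine endomorphism — e.g.\ by writing $\alpha_j = g(\pi)/N$ for a polynomial $g$ and integer $N$, and checking that $g(\mathrm{Frob})$ kills the $N$-torsion (equivalently $\alpha_j$ preserves the appropriate lattices / acts integrally on $\ell$-adic Tate modules for the finitely many primes $\ell \mid N$). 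If any $\alpha_j$ fails, $\End(\Jac(C)) \subsetneq \cO_K$ and \textbf{false} is correct; if all pass, the order they generate together with $\ZZ[\pi,\bar\pi]$ is all of $\cO_K$, so $\End(\Jac(C)) = \cO_K$ and Step (4) returns \textbf{true} correctly. Termination is clear: the Weil-polynomial list is finite and each basis-element test terminates.

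**The main obstacle** I anticipate is Step (3), specifically making rigorous and effective the claim ``$\alpha$ is an endomorphism'' — i.e.\ the content of Proposition~\ref{thm:is-endo}. One must be careful that the embedding $\iota: K \hookrightarrow \End(\Jac(C))\otimes\QQ$ is only determined up to the action of $\Aut(K/\QQ)$, so the test must be invariant under replacing $\iota$ by $\iota\circ\gamma$ (which it is, since $\cO_K$ is stable under $\Aut(K/\QQ)$, so whether \emph{the order $\iota(\cO_K)$} lands in $\End(\Jac(C))$ is well-defined). The genuinely technical part is bounding the primes $\ell$ one must examine and verifying integrality of the action on Tate modules — this is where the genus-$3$ generalization of the genus-$2$ arguments of \cite{EL09}, \cite{FL08}, \cite{LR13} does real work, and it rests on the structure theory of the curve $C$ (e.g.\ exploiting that $C$ is a Picard curve, or at least that $\Jac(C)$ is ordinary so its $p$-divisible group is well-understood). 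Everything else is bookkeeping: the finiteness and effectivity of the Weil-polynomial enumeration, the maximal-order computation, and the ring-theoretic ``sandwich'' argument reducing the full endomorphism ring to a basis check.
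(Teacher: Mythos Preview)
Your proposal is correct and follows essentially the same approach as the paper: the paper justifies Step~(i) by solving the relative norm equation $N_{K/K^+}(\pi)=p$ and invoking Honda--Tate, notes via \cite{Tat71} that $\End(\Jac(C))$ is then an order in $K$, and defers the correctness of Step~(iii) entirely to Proposition~\ref{thm:is-endo}, exactly as you outline with the sandwich $\ZZ[\pi,\bar\pi]\subseteq\End(\Jac(C))\subseteq\cO_K$. One small slip: your enumeration condition~(iii) on the Weil polynomials should read ``a root generates $K$ as a field'' (equivalently, $h$ is irreducible of degree~$6$ with $\QQ[X]/(h)\cong K$), not ``generates $\cO_K$ as an order'' --- the latter would require $\ZZ[\pi]=\cO_K$, which almost never holds and would make your list too restrictive, causing the algorithm to output \textbf{false} in cases where $\End(\Jac(C))=\cO_K$.
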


The values for Frobenius in Step i) satisfy $\pi\overline{\pi} = p$
with $\pi \in \cO_K$, i.e.\ $N_{K/K^+}(\pi) = p$ where $K^+$ is the maximal totally real subfield of $K$. This relative norm
equation can be used to find all such values of $\pi$. By the
Honda-Tate theorem, every such $\pi$ will arise as the Frobenius of
some abelian variety $A$ over $\FF_p$. If the characteristic
polynomial of $\pi$ is irreducible, then $A$ is simple and $\QQ(\pi)
\cong K$. If $p$ does not divide the middle coefficient of the characteristic polynomial of Frobenius, then $A$ is ordinary \cite[Definition 3.1]{How95}. By \cite [Pg 97, Exemple b]{Tat71}, the endomorphism ring of
$A$ is an order in $K$.

\subsection{Determining if an element is an endomorphism}
\label{sec:generating-set}

Our approach in this subsection follows closely that of \cite[Section
3]{FL08} and \cite[Section 4]{LR13} for genus 2. We discuss some
changes which are required for genus 3. To determine if
$\End(\Jac(C)) \cong \cO_K$, we wish to check, for some $\ZZ$-basis of
$\cO_K$, $\alpha_1, ..., \alpha_6$, whether each $\alpha_i$ is an
endomorphism. As $\ZZ[\pi]$ is an order in $K$, for every
$\alpha \in \cO_K$, we can write

\begin{align}
\alpha = P_\alpha(\pi)/n:=(a_0 + a_1 \pi + ... + a_5\pi^5)/n. \label{eqn:express-in-frob-full-denom}
\end{align}

for some integer $n$. The next proposition lets us check if $\alpha \in \cO_K$ is an endomorphism of $\Jac(C)$:


\begin{prop} Let $C$ be an ordinary curve of genus 3 over
  $\mathbb{F}_p$ with $\End(\Jac(C)) \otimes \mathbb{Q}=K$, and suppose $p$ splits completely
  in $K$. Let $\alpha  = P_\alpha(\pi)/n \in \cO_K$ with $n = \prod \ell_i^{e_i}$. Then $\alpha$ is an
  endomorphism of $\Jac(C)$ if and only if $P_\alpha(\pi)$ is zero on the
  $\ell_i^{e_i}$-torsion for $\ell_i \ne p$.
\label{thm:is-endo}
\end{prop}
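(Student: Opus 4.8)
The plan is to characterize when the rational endomorphism $\alpha = P_\alpha(\pi)/n \in \cO_K$ is actually an endomorphism of $\Jac(C)$ by testing it on torsion points, prime-by-prime in the factorization $n = \prod \ell_i^{e_i}$. The key observation is that $\alpha$ is an endomorphism of $\Jac(C)$ if and only if $P_\alpha(\pi) \in n \cdot \End(\Jac(C))$, i.e. the endomorphism $P_\alpha(\pi)$ (which certainly lies in $\ZZ[\pi] \subseteq \End(\Jac(C))$) is divisible by $n$ inside $\End(\Jac(C))$. Since $\End(\Jac(C))$ is a finitely generated $\ZZ$-module, divisibility by $n$ can be checked one prime power $\ell_i^{e_i}$ at a time, so it suffices to show: for $\ell = \ell_i \ne p$, the endomorphism $P_\alpha(\pi)$ is divisible by $\ell^{e_i}$ in $\End(\Jac(C))$ if and only if $P_\alpha(\pi)$ kills the $\ell^{e_i}$-torsion subgroup $\Jac(C)[\ell^{e_i}]$.

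First I would establish the forward direction, which is immediate: if $P_\alpha(\pi) = \ell^{e_i} \beta$ for some $\beta \in \End(\Jac(C))$, then for any $\ell^{e_i}$-torsion point $x$ we have $P_\alpha(\pi)(x) = \beta(\ell^{e_i} x) = \beta(0) = 0$. For the converse, suppose $P_\alpha(\pi)$ vanishes on $\Jac(C)[\ell^{e_i}]$. Since $\ell \ne p$, multiplication by $\ell^{e_i}$ on $\Jac(C)$ is a separable isogeny whose kernel is exactly $\Jac(C)[\ell^{e_i}]$; by the universal property of quotient isogenies (every homomorphism killing $\ker[\ell^{e_i}]$ factors through $[\ell^{e_i}]$), the vanishing of $P_\alpha(\pi)$ on $\Jac(C)[\ell^{e_i}]$ gives a homomorphism $\beta : \Jac(C) \to \Jac(C)$ with $P_\alpha(\pi) = \beta \circ [\ell^{e_i}] = \ell^{e_i}\beta$. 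Assembling the local contributions via CRT (using that the $\ell_i$ are coprime) one gets $P_\alpha(\pi)/n \in \End(\Jac(C))$, which is exactly the claim that $\alpha$ is an endomorphism. I would cite the relevant fact on the separability of $[\ell^{e_i}]$ for $\ell \ne p = \Char \FF_p$ and the factorization-through-an-isogeny property (e.g. from Mumford's \emph{Abelian Varieties} or Milne's notes).

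The step I expect to be the main obstacle — really the only subtle point — is the converse: justifying that a homomorphism of abelian varieties killing a finite subgroup scheme factors through the corresponding quotient isogeny in a way that produces a genuine \emph{endomorphism} (and not merely an element of $\End(\Jac(C)) \otimes \QQ$). Because we are over $\FF_p$ and working with the actual group scheme $\Jac(C)[\ell^{e_i}]$ rather than just $\overline{\FF}_p$-points, I need to be careful that $P_\alpha(\pi)$ kills the full torsion \emph{scheme}; since $\ell \ne p$ this scheme is étale, so killing it on $\overline{\FF}_p$-points suffices, and the factorization is then clean. A secondary, purely bookkeeping point is to note that $n$ can be taken to be the exact denominator of $\alpha$ relative to $\ZZ[\pi]$, so that no $\ell = p$ ever appears in $n$ — this uses that $\ZZ[\pi]$ and $\cO_K$ have the same $p$-adic completion when $p$ splits completely in $K$, which follows from $p \nmid [\cO_K : \ZZ[\pi]]$ in the ordinary, completely-split case (cf. the discussion of ordinariness via the middle coefficient of the Frobenius polynomial earlier in this section).
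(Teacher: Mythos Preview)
Your treatment of the primes $\ell_i \ne p$ is correct and is essentially the paper's argument as well: the paper cites \cite[Lemma 3.2]{FL08} for the reduction to one prime power at a time and \cite[Corollary 9]{EL09} for the torsion criterion, and your direct argument via factorization through the separable isogeny $[\ell^{e_i}]$ is exactly what underlies that corollary.

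The gap is in your handling of $\ell_i = p$. Your claim that $p \nmid [\cO_K : \ZZ[\pi]]$ (equivalently, that $\ZZ[\pi]$ and $\cO_K$ have the same $p$-adic completion) is false in genus $\geq 2$. Concretely, since $\pi\overline{\pi} = p$, three of the six Galois conjugates of $\pi$ lie in $p\cO_K$, so the minimal polynomial of $\pi$ reduces modulo $p$ to a polynomial divisible by $x^3$; hence $\ZZ[\pi]\otimes\ZZ_p \cong \ZZ_p[x]/(f(x))$ is \emph{not} \'etale over $\ZZ_p$ and strictly smaller than $\cO_K\otimes\ZZ_p$. In particular the exact denominator of $\alpha$ relative to $\ZZ[\pi]$ can and typically will be divisible by $p$ (e.g.\ already $\alpha = \overline{\pi}$ has $p\overline{\pi} = p\cdot p/\pi \in \ZZ[\pi]$ but $\overline{\pi}\notin\ZZ[\pi]$).

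The paper's fix is to bring in the intermediate order $\ZZ[\pi,\overline{\pi}]$. One has $[\ZZ[\pi,\overline{\pi}]:\ZZ[\pi]]$ a power of $p$, while $p \nmid [\cO_K : \ZZ[\pi,\overline{\pi}]]$ when $p$ splits completely in $K$ (this is the step that actually uses the splitting hypothesis, cf.\ \cite[Proposition 3.7]{FL08}). Comparing $p$-primary parts of $\cO_K/\ZZ[\pi]$ and $\ZZ[\pi,\overline{\pi}]/\ZZ[\pi]$ then shows that any $\beta\in\cO_K$ with $p^k\beta\in\ZZ[\pi]$ already lies in $\ZZ[\pi,\overline{\pi}]$. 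Since $\overline{\pi}$ is the Verschiebung and hence always an endomorphism, $\ZZ[\pi,\overline{\pi}]\subseteq\End(\Jac(C))$, so the $p$-part of $\alpha$ is automatically an endomorphism with no torsion test required. Your proof needs this argument (or an equivalent) in place of the incorrect index claim.
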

\begin{proof} 

  By \cite[Lemma 3.2]{FL08}, it suffices to check that each
  $P_\alpha(\pi)/\ell_{i}^{d_{i}}$ is an endomorphism. If $\ell_i$ is
  coprime to $p$, then by \cite[Corollary 9]{EL09}, we can check
  whether $P_\alpha(\pi)/\ell_i^{d_i}$ is an endomorphism by
  determining if $P_\alpha(\pi)$ is zero on the
  $\ell_i^{d_i}$-torsion.

  It remains to handle the case where $\ell_i = p$. For a group $A$,
  denote the $p$-primary part of $A$ by $A_p$. Write
  $[\cO_K : \ZZ[\pi]] = [\cO_K : \ZZ[\pi, \overline{\pi}]] \cdot
  [\ZZ[\pi, \overline{\pi}] : \ZZ[\pi]].$ It is not hard to see that
  $[\ZZ[\pi, \overline{\pi}] : \ZZ[\pi]]$ is a power of $p$ (See \cite
  [Corollary 3.6]{FL08}.) As $p$ splits completely in $K$, one can
  show, $p \nmid [\cO_K : \ZZ[\pi, \overline{\pi}]]$, thus
  $|(\cO_K/\ZZ[\pi])_p| = |(\ZZ[\pi,
  \overline{\pi}]/\ZZ[\pi])_p|$. This follows from an argument similar
  to \cite[Proposition 3.7]{FL08}.

But this implies for any $\beta \in \cO_K$, if $p^k\beta \in \ZZ[\pi]$ then $\beta \in \ZZ[\pi, \overline{\pi}]$. Thus, any such element is an endomorphism.
\end{proof}

\subsection{Computing the $\ell^d$-torsion and arithmetic}

The algorithm of Couveignes \cite{Cou09} shows how to compute the
$\ell^d$-torsion. Couveignes' method works for a very general class of
curves. However, we instead use some algorithms specific to Picard
curves. For a Picard curve $C/k$, where $k$ is a finite field,
Couveignes' method requires the ability to choose random points in
$\Jac(C)(k)$. This is easy to do if we represent elements of
$\Jac(C)(k)$ as formal sums of points on $C$. However, to do
arithmetic on $\Jac(C)(k)$, it is easier to represent elements as
ideals in the affine coordinate ring of $C$. Thus, we need to be able
to switch between the two representations. First, we recall the following consequence of the Riemann-Roch theorem:

\begin{prop} For $C$ a Picard curve and $P_\infty$ the point at infinity for the affine model described above, for any degree 0 divisor $D$ there is a unique effective divisor $E$ of minimal degree $0 \le m \le 3$ such that $E - m P_\infty$ is equivalent to $D$.
\label{thm:uniq-rep-pts}
\end{prop}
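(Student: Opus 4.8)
The plan is to use the Riemann–Roch theorem on the Picard curve $C$, which has genus $3$. Let $D$ be a degree $0$ divisor on $C$ (defined over $\overline{k}$, say). First I would consider the divisor $D + 3P_\infty$, which has degree $3 = g$. By Riemann–Roch, $\ell(D + 3P_\infty) \ge \deg(D + 3P_\infty) + 1 - g = 1$, so there is a nonzero function $h$ in the corresponding Riemann–Roch space; then $E := D + 3P_\infty + \Div(h) \ge 0$ is an effective divisor of degree $3$ with $E - 3P_\infty \sim D$. This already produces an effective representative of degree $\le 3$; taking one of minimal degree $m$ among all such gives existence of the divisor claimed in the statement.

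Next I would prove uniqueness at the minimal degree. Suppose $E - mP_\infty \sim D$ and $E' - mP_\infty \sim D$ with $E, E'$ effective of the same minimal degree $m$. Then $E - E' \sim 0$, so there is a function $f$ with $\Div(f) = E - E'$. If $f$ is constant then $E = E'$ and we are done; otherwise $f$ is a nonconstant function whose only poles lie in $E'$, so $\ell(E') \ge 2$. I would then argue that for $m \le 3$ this forces a contradiction with minimality: if $\ell(E') \ge 2$ with $E'$ effective of degree $m$, one can subtract a point of $E'$ (choosing the function in the Riemann–Roch space vanishing appropriately, or using that a base point can be removed) to get an effective divisor of strictly smaller degree still equivalent to $D$, contradicting that $m$ was minimal. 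Concretely, for $m = 0$ uniqueness is trivial, and for $1 \le m \le 3 = g$ a special divisor of degree $m$ would give $\ell(E') \ge 2$, i.e. $E'$ (hence $D$) is equivalent to a strictly smaller effective divisor — the point being that on a curve of genus $g$ a generic effective divisor of degree $\le g$ is nonspecial, and a special one is not of minimal degree in its class.

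The key observation making this clean is that $P_\infty$ is a single rational point and that $\deg f$ equals the degree of its polar divisor; the reduction-of-degree step is where I would be most careful, since it is exactly the assertion that a minimal-degree effective representative is nonspecial (equivalently, has $\ell = 1$), and it is this that both forces $m \le 3$ and yields uniqueness. I expect this reduction step — showing that $\ell(E') \ge 2$ contradicts minimality of $m$ — to be the main (though routine) obstacle; everything else is a direct application of Riemann–Roch. One should also note the mild point that the statement is geometric, so it suffices to argue over $\overline{k}$, with the divisor $mP_\infty$ and the equivalence being Galois-stable because $P_\infty$ is rational.
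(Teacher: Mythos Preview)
Your argument is correct and is essentially what lies behind the paper's proof: the paper does not argue directly but simply cites \cite[Theorem 1]{GPS02}, noting that a Picard curve is nonsingular with a $k$-rational point $P_\infty$. That theorem is proved by exactly the Riemann--Roch computation you outline, so your proposal is a spelled-out version of the same approach rather than a genuinely different one.

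One imprecision in your uniqueness step is worth tightening. The point you should ``subtract'' is $P_\infty$, not a point in the support of $E'$. The clean formulation is: minimality of $m$ means $\ell(D+(m-1)P_\infty)=0$, and since always $\ell(D+mP_\infty)\le \ell(D+(m-1)P_\infty)+1$, we get $\ell(D+mP_\infty)=1$, which is exactly uniqueness of $E$. Equivalently, if $\ell(E')\ge 2$ then $\ell(E'-P_\infty)\ge \ell(E')-1\ge 1$, so there is an effective $E''$ of degree $m-1$ with $E''-(m-1)P_\infty\sim D$, contradicting minimality. Your phrase ``subtract a point of $E'$'' instead suggests removing some $Q$ from the support of $E'$, which yields $E'-Q\sim D+mP_\infty-Q$ and is not of the form $(\text{effective})-(m-1)P_\infty$; the correct step really uses the specific point $P_\infty$. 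With that adjustment the argument is complete.
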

\begin{proof}
As Picard curves are non-singular with a $k$-rational point, the proof follows from \cite [Theorem 1]{GPS02}.
\end{proof}

We will call such a unique divisor above the reduced representation
of $D$. So to find a random point in $\Jac(C)(k)$, we can just pick
at most 3 random points on $C$.

A reduced divisor $D$ for which all points in the effective part $E$
lie in the same $\Gal(\overline{k}/k)$-orbit will be called an {\bf
  irreducible} divisor. Every degree 0 divisor can be expressed as a
sum of irreducible divisors.

We can also represent points on $\Jac(C)$ as elements of a particular
class group. Denote by $R = k[x,y]/\langle y^3-f(x) \rangle$ the coordinate ring of
$C$. By \cite[Proposition 2]{GPS02}, $R$ is the integral closure of
$k[x]$ in $k(C)$.

Given an irreducible divisor $P$ we can associate to it a prime ideal
$\fP$ of $R$. We can extend this to a map $\rho$ from effective divisors to
ideals of $R$ as:
\[\rho\left(\sum n_iP_i\right) := \prod \fP_i^{n_i},\]
with the $P_i$ irreducible divisors and the $\fP_i$  the corresponding primes of $R$.

\begin{prop} For $C$ a Picard curve over $k$ and $R$ the coordinate
  ring of $C$ described above, the map $\rho$ induces an isomorphism
$\Jac(C)(k) \rightarrow \Cl(R)$,
where $\Cl(R)$ is the class group of $R$.
\end{prop}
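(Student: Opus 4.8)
The plan is to establish the isomorphism $\Jac(C)(k) \to \Cl(R)$ by exhibiting $\rho$ as a group homomorphism via the standard correspondence between the Picard group of a smooth affine curve and the ideal class group of its coordinate ring, then checking it is bijective using Proposition~\ref{thm:uniq-rep-pts}. First I would recall that $R = k[x,y]/\langle y^3 - f(x)\rangle$ is a Dedekind domain: it is the integral closure of $k[x]$ in $k(C)$ by \cite[Proposition 2]{GPS02}, hence integrally closed, Noetherian, and of dimension one. Therefore its ideal class group $\Cl(R)$ coincides with the divisor class group of $\Spec R$, which in turn is the Picard group of the smooth affine curve $C \setminus \{P_\infty\}$. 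Concretely, every nonzero fractional ideal of $R$ factors uniquely as a product of prime ideals, and the primes of $R$ are in bijection with the closed points of $C$ other than $P_\infty$; under this identification $\rho$ sends an effective divisor supported away from $P_\infty$ to the corresponding ideal, and this extends to a surjective group homomorphism from $\mathrm{Div}^0(C)$ (divisors supported away from $P_\infty$, of any degree) onto the group of fractional ideals, descending to an isomorphism $\Div(C\setminus\{P_\infty\}) / \mathrm{(principal)} \xrightarrow{\sim} \Cl(R)$.

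Next I would relate this to $\Jac(C)(k)$. Since $P_\infty$ is a single $k$-rational point, the excision/localization sequence for divisor class groups gives an exact sequence
\[
\ZZ \cdot [P_\infty] \longrightarrow \Cl^0(C) \longrightarrow \Cl(C \setminus \{P_\infty\}) \longrightarrow 0,
\]
and because $P_\infty$ has degree one the image of the first map is all of the degree-zero part's "difference with $\ZZ[P_\infty]$"; concretely, the map $D \mapsto D - (\deg D)P_\infty$ identifies $\mathrm{Pic}^0(C)(k) = \Jac(C)(k)$ with $\mathrm{Pic}(C\setminus\{P_\infty\})(k) \cong \Cl(R)$. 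To make this fully explicit I would use Proposition~\ref{thm:uniq-rep-pts}: every degree-zero divisor $D$ has a unique reduced representative $E - mP_\infty$ with $E$ effective of degree $m \le 3$ supported away from $P_\infty$, and I would set the composite $\Jac(C)(k) \to \Cl(R)$ to be $[D] \mapsto [\rho(E)]$. Well-definedness (independence of the choice of representative $D$ within its class) follows since two such $E - mP_\infty$, $E' - m'P_\infty$ are linearly equivalent iff $\rho(E)$ and $\rho(E')$ differ by a principal ideal of $R$, as principal divisors on $C$ supported away from $P_\infty$ correspond exactly to principal fractional ideals of $R$ generated by elements of $k(C)$ with no zero or pole at the other points—one must note that a function with a pole only at $P_\infty$ lies in $R$, so such functions contribute trivially to $\Cl(R)$. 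That $\rho$ is a homomorphism is immediate from $\rho(D_1 + D_2) = \rho(D_1)\rho(D_2)$ on effective divisors and the uniqueness of prime factorization.

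Injectivity and surjectivity then follow from Proposition~\ref{thm:uniq-rep-pts}: surjectivity because every ideal class has a representative that is a product of primes, i.e.\ comes from an effective divisor $E$ supported away from $P_\infty$, and $[E - (\deg E)P_\infty] \in \Jac(C)(k)$ maps to it; injectivity because if $\rho(E)$ is principal in $R$, say $\rho(E) = (g)$ with $g \in k(C)^\times$, then $\Div(g) = E - (\deg E)P_\infty$ as divisors on $C$ (the only possible extra pole being at $P_\infty$), so $E - (\deg E)P_\infty \sim 0$, forcing the original class in $\Jac(C)(k)$ to be trivial—and Galois-invariance throughout is automatic since every object ($R$, the primes, $P_\infty$) is defined over $k$, so the isomorphism respects $k$-rational points. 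The main obstacle is the careful bookkeeping at $P_\infty$: one must verify that passing between the degree-zero divisor class group of the complete curve $C$ and the ideal class group of the affine ring $R$ introduces no kernel or cokernel beyond what is killed by the $\ZZ[P_\infty]$ factor, i.e.\ that a rational function regular away from $P_\infty$ is genuinely an element of $R$ (this is exactly the statement that $R$ is the ring of functions regular on $C \setminus \{P_\infty\}$, which follows from $R$ being integrally closed with the right fraction field and $C \setminus \{P_\infty\} = \Spec R$). Once that identification is pinned down, everything else is formal; I expect the cleanest write-up simply cites the general smooth-affine-curve dictionary (e.g.\ via \cite{GPS02}) and then invokes Proposition~\ref{thm:uniq-rep-pts} to make the map and its inverse concrete.
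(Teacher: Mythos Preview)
Your argument is correct and is essentially an unpacking of what the paper does: the paper's proof is a one-line citation of \cite[Proposition 3]{GPS02}, and your write-up spells out the underlying Dedekind-domain/excision reasoning that makes that citation work. One small point worth tightening: when you invoke Proposition~\ref{thm:uniq-rep-pts} you assume the effective part $E$ of the reduced representative is supported away from $P_\infty$; this is true (otherwise subtracting $P_\infty$ would lower the degree, contradicting minimality), but you should say so explicitly.
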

\begin{proof}
This follows from applying \cite[Proposition 3]{GPS02}.
\end{proof}

We refer to the image of a reduced divisor under the map $\rho$ as a reduced ideal. 

\begin{prop} Given a reduced divisor $D$, there is an algorithm to
  find generators $u(x), w(x,y)$ for the ideal $\rho(D)$. Moreover,
  given an ideal $I$ of $R$ in the form
  $I = \langle u(x), w(x,y)\rangle$, we can compute $\rho^{-1}(I)$.
\end{prop}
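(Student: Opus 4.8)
The plan is to exhibit explicit algorithms in both directions, leaning on the correspondence between reduced divisors, their reduced ideals in $R = k[x,y]/\langle y^3-f(x)\rangle$, and the class group $\Cl(R)$ established in the preceding propositions. In the forward direction, start from a reduced divisor $D = E - mP_\infty$ with $0 \le m \le 3$, writing $E = \sum n_i P_i$ as a sum of irreducible divisors. For each point $P_i$ I would take the corresponding prime $\fP_i$ of $R$: concretely, if $P_i$ has $x$-coordinate a root of an irreducible $g_i(x) \in k[x]$ and the points in its Galois orbit have $y$-coordinates cut out by a relation $y \equiv h_i(x) \bmod g_i(x)$ (available because $R$ is a free $k[x]$-module of rank $3$ with basis $1,y,y^2$, so the fibre over $V(g_i)$ is described by the characteristic/minimal polynomial of multiplication by $y$), then $\fP_i = \langle g_i(x),\, y - h_i(x)\rangle$. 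Multiplying these ideals with multiplicities and handling the $-mP_\infty$ part (which only affects the degree/normalization, since $P_\infty$ corresponds to the "ideal at infinity" and contributes trivially to $R$ as an affine object) gives $\rho(D)$; finally I would put the product ideal into the normal form $\langle u(x), w(x,y)\rangle$ with $u(x) = $ the monic generator of $\rho(D)\cap k[x]$ and $w(x,y)$ the reduction of $y$ (or a suitable $k[x]$-combination of $1,y,y^2$) modulo $u$, using a Hermite-normal-form computation on the $k[x]$-module generators. This is essentially the genus-3 analogue of Mumford representation, and the references \cite[Proposition 2, Proposition 3]{GPS02} guarantee the ideal-theoretic picture is exactly this.

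For the reverse direction, given $I = \langle u(x), w(x,y)\rangle$, the recipe is: factor $u(x)$ into irreducibles over $k$, and for each irreducible factor $g(x)^{e}$ localize to read off which prime(s) $\fP$ of $R$ above $V(g)$ divide $I$ and to what power — this is a finite linear-algebra computation over the residue field $k[x]/g(x)$, determining the $y$-relations that $w$ forces. Summing the resulting irreducible divisors with their multiplicities and subtracting the appropriate multiple of $P_\infty$ to land in the degree-$0$, reduced range (using Proposition~\ref{thm:uniq-rep-pts} to know the answer is unique) yields $\rho^{-1}(I)$. I would remark that termination and correctness are immediate once one observes that $R/I$ is a finite-dimensional $k$-vector space whose structure is computable, so both the factorization of $u$ and the local analysis are effective.

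I expect the main obstacle to be bookkeeping around the point at infinity and the normalization of the reduced representative, rather than anything deep: $R$ only "sees" the affine part of $C$, so $P_\infty$ is not a prime of $R$, and one must be careful that the isomorphism $\Jac(C)(k) \cong \Cl(R)$ absorbs the $-mP_\infty$ shift correctly and that the output divisor is genuinely the minimal-degree effective one promised by Proposition~\ref{thm:uniq-rep-pts}. Concretely this means: after forming the product (resp. after reading off the divisor from $I$), one may need to reduce modulo principal ideals — equivalently, perform a Riemann–Roch reduction step — to force $0 \le \deg E \le 3$, and verifying that this reduction is algorithmic (it is: it amounts to computing a short vector in a rank-$3$ $k[x]$-lattice, exactly as in the genus-$2$ Cantor-type reduction) is the one place the argument needs a sentence of justification. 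Everything else is a direct translation of \cite[Propositions 2 and 3]{GPS02} together with standard Hermite-normal-form manipulation of ideals in the Dedekind-like ring $R$.
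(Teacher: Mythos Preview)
Your approach is correct in outline and in fact more uniform than the paper's, but it differs in execution and carries some unnecessary baggage.

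The paper does not use Hermite normal form or an abstract $k[x]$-lattice computation. Instead it gives an explicit case-by-case recipe for $w(x,y)$ depending on how many distinct $x$-coordinates occur in the Galois orbit of an irreducible divisor $Q$: if all $x_i$ are distinct, $w = y - v(x)$ with $v$ the interpolating polynomial; if there are two distinct $x$-values $x_1,x_2$, it writes down a Lagrange-type combination in both $x$ and $y$; if there is a single $x$-value, $w = \prod(y - y_i)$. For the inverse it simply finds all roots $x_n$ of $u$ over $\overline{k}$, then all roots $y_{n,m}$ of $w(x_n,y)$, and sums the resulting points. Your HNF approach is cleaner and avoids the case split, while the paper's version is more elementary and hands you the generators directly without any module computation.

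Two remarks on your write-up. First, your displayed form $\fP_i = \langle g_i(x),\, y - h_i(x)\rangle$ is exactly what fails in the paper's second and third cases: when several points of the orbit share an $x$-coordinate, $y$ is not a function of $x$ on the orbit and $w$ genuinely needs $y^2$ terms. You acknowledge this in the parenthetical about the minimal polynomial of multiplication by $y$, but the sentence as written is not correct in general; tighten it. Second, the entire discussion of $P_\infty$ and Riemann--Roch reduction is superfluous for this proposition: $\rho$ is defined on the effective part $E$ of a reduced divisor $D = E - mP_\infty$, the input is already reduced by hypothesis, and the statement asks only for the generators of $\rho(D)$ and the inverse on an ideal already in two-generator form. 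No reduction step is needed here, and the paper does not perform one.
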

\begin{proof}
  As a reduced divisor is a sum of irreducible divisors, it suffices
  to associate to an irreducible divisor $Q$ the corresponding prime
  ideal. We can associate a prime ideal $\fP$ in $R$ by first
  considering the polynomial $u = \prod (x - x_i)$, where the product
  is over all $x$-coordinates of points in $Q$. We then take a
  polynomial $w(x,y)$ such that the set of common roots of $u, w$ is
  exactly the set of points of $Q$. If the $x_i$ are all distinct, then
  we take the polynomial $w = y-v(x)$, where $v(x)$ is the polynomial
  interpolating the points in $Q$. If the roots of $u(x)$ are not
  distinct, then we can construct $w$ in a way similar to the
  interpolation polynomial. In the case where there are two distinct
  $x$-coordinates $x_1, x_2$, let $y_1$ and $y_2$ be polynomials whose
  roots are the $y$-coordinates corresponding to $x_1$ and $x_2$,
  respectively. Then
\[w(x,y) := \dfrac{x - x_2}{x_1 - x_2}y_1(y) + \dfrac{x - x_1}{x_2 -
      x_1}y_2(y).\] If there is only a single $x$-coordinate, then we
  can write $w(x,y) = \prod(y-y_i)$, where the $y_i$ are the
  $y$-coordinates in the Galois orbit. The corresponding prime ideal
  in $R$ is then the ideal generated by $u$ and $w$.

We will now show how to explicitly find the inverse of $\rho$. Let
$D = \prod \fP_i^{n_i}$ be the ideal decomposition of $D$. Write
$\fP_i = \langle u(x), w(x,y)\rangle$. We can find the set of common zeroes of
$\fP_i$ by finding all roots $x_n$ of $u(x)$ and all roots $y_{n,m}$
of $w(x_n,y)$. Then the divisor $(\fP_i)$ equals $\sum (x_n,y_{n,m})$. Thus
we have constructed the inverse of the map $\rho$ on a prime divisor
$\fP$. By linearity, we can explicitly find the inverse
of any reduced ideal $D$ .
\end{proof}
There are several algorithms which perform arithmetic on $\Jac(C)(k)$
using the representation of points on $\Jac(C)(k)$ as ideals in the
class group, for example, \cite{GPS02}, \cite{Ari03}. In particular,
we will use the algorithm of \cite{Ari03} for the examples we
compute. To add two elements $P$, $Q$ of $\Jac(C)(k)$, one multiplies
the corresponding ideals to get an ideal $D$. One then wishes to get a
reduced ideal $D'$, to have a unique representative for the point
$D$. The algorithm of \cite{Ari03} gives a function $g$ such that $D'
= D + (g)$. The function $g$ is necessary for the computation of the
Weil pairing in the algorithm of Couveignes for computing torsion.

\section{Examples}
\label{sec:examples}
All examples were run on a computer with 4 Intel Xeon quad-core
processors and 64 GB of RAM. 

Let $K = K^+(\zeta_3)$, where $K^+$ is obtained by adjoining
to $\QQ$ a root of $x^3 - x^2 - 2x + 1$. We can verify that $K$ is
Galois with Galois group $\ZZ/6\ZZ$ and choose a primitive CM-type on
$K$. All types on $K$ are equivalent, so our choice does not matter. We
count the expected degree of our class polynomials using \cite[Pg
112, Note 3]{Shi98}. This is equivalent to counting the number of
elements in the {\it polarized class group} (see \cite{Bis15}), for
which there is a function in the AVIsogenies package \cite{BCR10}. We find that the
degree of the class polynomials for $K$ as above is 1. The first four
primes satisfying the conditions of Theorem~\ref{thm:main-algorithm}
are $13, 43, 97, 127$.  For $p=127$, our algorithm took 7 hours and 9
minutes of clock time and found one Picard curve in $\Picr{}$, that
is, one Picard curve whose Jacobian is in $\altdef{K}{\Phi}$:
$y^3 = x^4 + 75x^2 + 37x + 103$.

The Picard curve $\CC$ with CM by $\cO_K$, for $K$
as above, was computed in \cite{KW05}. However, the authors could not
verify that the curve they produce has CM by $\cO_K$. Our output
agrees with the result of their paper reduced modulo
$127$. Furthermore, assuming the curve they compute is correct, we get
a bound as in Theorem~\ref{thm:CRT} for the denominators and size of
coefficients in the class polynomials $H_i^\Phi$. In particular,
$N = 2^{12}$ and $M = 7$ work for the values in Theorem
~\ref{thm:CRT}. Using these values, we can run the CRT algorithm of
~\ref{thm:main-algorithm} to construct the class polynomials
$H_i^\Phi$ defined over $\QQ$. The algorithm took 8 hours, 55 minutes
to run.  We only need to reduce modulo the 4 primes $13, 43, 97,
127$. Our result agrees with the result of \cite{KW05, LS16}. Thus, our
algorithm can compute the class polynomials $H_i^\Phi$ given that one can
compute the bound in Theorem ~\ref{thm:CRT}. If we compare the algorithms on the small example we computed
above, the algorithm in~\cite{LS16} performs much faster: it was able
to compute the class polynomials in seconds. However,  since there are no known
bounds, yet, on the denominators of the class polynomials, no
complexity analysis has been done for our algorithm or the algorithms in~\cite{KW05,
  LS16}. So it is not clear how they would compare asymptotically.

Now let $K = K^+(\zeta_3)$, where $K^+$ is the field obtained by
adjoining to $\QQ$ a root of $x^3 + x^2 - 3x - 1$. This field is
non-Galois, and the Galois group of the normal closure over $\QQ$ is
$S_3 \times \ZZ/2\ZZ$. We also pick a CM-type $\Phi$ on $K$. We
compute that we expect our class polynomials to have degree 3 using
the polarized class group. We pick $p = 67$, which satisfies the
conditions of Theorem ~\ref{thm:main-algorithm}.  Our algorithm ran in
2 hours and 23 minutes, and we got 3 Picard curves over $\FF_p$ whose
Jacobians lie in $\altdef{K}{\sigma\Phi}$ for some
$\sigma \in \Gal(\overline{\QQ}/\QQ)$:

$y^3 = x^4 + 8x^2 + 64x + 61, \quad y^3 = x^4 + 62x^2 + 25x + 6, \quad y^3 = x^4 + 54x + 54$.

\section*{Acknowledgments}
The authors would like to thank Yuri Zarhin
for helpful discussions.  We thank the anonymous referees for several
helpful suggestions. We thank Marco Streng for valuable feedback on
an earlier version of this paper and for pointing out additional
references.

\end{document}